\newtheorem{theorem}{Theorem}[section]
\newtheorem{lemma}[theorem]{Lemma}
\newtheorem{proposition}[theorem]{Proposition}
\newtheorem{corollary}[theorem]{Corollary}
\newtheorem{remark}[theorem]{Remark}
\newcommand{\tn}{|\mspace{-1mu}|\mspace{-1mu}|}
\numberwithin{equation}{section}
\title{\bf Stabilized nonconforming finite element methods for data assimilation
  in incompressible flows}
\author{Erik Burman\footnote{Department of Mathematics, University College London, London, UK-WC1E  6BT, United Kingdom, e.burman@ucl.ac.uk}
\; and \,
Peter Hansbo
\footnote{Department of Mechanical Engineering, J\"onk\"oping University,
SE-55111 J\"onk\"oping, Sweden, peter.hansbo@ju.se}}
\date{}
\begin{document}
\maketitle
\begin{abstract}
We consider a stabilized nonconforming finite element method for data
assimilation in incompressible flow subject to the Stokes' equations. The method uses a primal dual
structure that allows for the inclusion of nonstandard data. Error
estimates are obtained that are optimal compared to the conditional
stability of the ill-posed data assimilation problem.
\end{abstract}

\section{Introduction}\label{sec:intro}
The design of computational methods for the numerical approximation of the Stokes' system of equations modelling
creeping incompressible flow is by and large well understood in the
case where the underlying problem is well-posed. Indeed, provided suitable
boundary conditions are set, the system of equations are known to
satisfy the hypotheses of the Lax-Milgram lemma and Brezzi's theorem
ensuring well-posedness of velocities and pressure. These theoretical
results then underpin much of the theory for the design of stable and
accurate finite element methods for the Stokes system \cite{GR86,BBF13}.

In many cases of interest in applications, however, the necessary data
for the theoretical results to hold are not known; this is the case
for instance in data assimilation in atmospheric sciences or
oceanography. Instead of knowing the solution on the boundary, data in
the form of measured values of velocities may be known in some other
set. It is then not obvious how best to apply the theory
developed for the well-posed case. A classical approach is to rewrite the system as an
optimisation problem and add some regularization, making the problem
well-posed on the continuous level and then approximate the
well-posed problem using known techniques. For examples of methods
using this framework see \cite{ASH09} and \cite{BD14}.

In this paper we advocate a different approach in the spirit of
\cite{Bu13,Bu14}.
The idea is to formulate the optimization problem on the continuous
level, but without any regularization. We then discretize the
ill-posed continuous problem and instead regularize the discrete
solution.
This leads to a method in the spirit of stabilized finite element
methods where the properties of the different stabilizing operators
are well studied. An important feature of this approach is that it
eliminates the need for a perturbation analysis on the continuous level
taking into account the Tikhonov regularization and perturbations in
data, that the discretization error then has to match. In our case we
are
only interested in the discretization error and the perturbations in
data. This allows us to derive error estimates that are optimal in the
case of unperturbed data in a similar fashion as for the well-posed case. 

We exemplify the theory in a model case for data assimilation
where data is given in some subset of the computational domain instead
of the boundary, and we obtain error estimates using a conditional
stability result in the form of a three ball inequality due to
Lin, Uhlmann, and Wang \cite{LUV10}. A particular feature of the method formulated for the
integration of data in the bulk (and not on the boundary), is that the
dual adjoint problem does not require any regularization on the
discrete level. Indeed, the adjoint equation is inf--sup stable, similarly
to the case of elliptic problems on non-divergence form discussed
in \cite{WW15}.

The rest of the paper can be outlined as follows. First, in Section \ref{sec:stokes}, we introduce the Stokes' problem that we are interested
in and propose the continuous minimization problem. Then, in Section \ref{fem}, we present the non-conforming finite element method and prove
some preliminary results. In Section \ref{theory} we prove the fundamental
stability and convergence results of the formulation. Finally we show
the performance of the approach on some numerical examples.

\section{Stokes equations}\label{sec:stokes}
Let $\Omega$ be a polygonal (polyhedral) domain in $\mathbb{R}^d$, $d=2$ or $3$.
We are interested in computing solutions to the Stokes' system
\begin{equation}\label{stokes}
\begin{array}{rcl}
-\Delta u + \nabla p &=& \mathfrak{f} \quad \mbox{ in } \Omega \\[3mm]
\nabla \cdot u & = & \mathfrak{g} \quad \mbox{ in } \Omega.
\end{array}
\end{equation}
Typically these equations are then equipped with
suitable boundary conditions and are known to be well-posed using the Lax-Milgram Lemma
for the velocities and Brezzi's theorem for the pressures. It is also
known that the following continuous dependence estimate holds, here 
given under the assumption of homogeneous Dirichlet conditions on the boundary.
\begin{equation}\label{Cont_dep_well_posed}
\|u\|_{H^1(\Omega)} + \|p\|_{\Omega} \lesssim
\|\mathfrak{f}\|_{H^{-1}(\Omega)} + \|\mathfrak{g}\|_{\Omega},
\end{equation}
where we used the notation
  $\|x\|_\Omega:=\|x\|_{L^2(\Omega)}$ and $a \lesssim b$ for $a \leq
  C b$ with $C>0$. 

Observe that for any solution to the equations \eqref{stokes} and in any closed ball $B_R \subset \Omega$ there holds
\begin{equation}\label{eq:elliptic_reg}
(u,p)\vert_{B_R} \in  [H^2(B_R)]^d \times
H^1(B_R).
\end{equation}
Provided $\mathfrak{f} \in [L^2(\Omega)]^d$ and $\mathfrak{g} \in
H^1(\Omega)$. See for instance \cite[Proposition 3.2]{Ser15}.

We will in the following make the stronger assumption that $(u,p) \in [H^2(\Omega)]^d \times
H^1(\Omega)$. Observe that this is not a strong
  assumption for the particular problem we will study below, since the domain $\Omega$ here is somewhat arbitrary and not
necessarily determined by a physical geometry. Indeed the only situation in which this assumption can fail is
when the boundary of $\Omega$ coincides with a physical boundary with
a corner.

Herein the main focus will be on methods that allow for the accurate
approximation of the solution under the much weaker stability
estimates that remain valid in the case of ill-posed
  problems where \eqref{Cont_dep_well_posed} fails.

A situation of particular
interest is the case where the boundary data $g_D$ is
known only on a portion $\Gamma_D$ of $\partial \Omega$ and nothing is
known of the boundary conditions on the remaining part $\Gamma_D'
:= \partial \Omega \setminus \Gamma_D$. This lack of boundary information
makes the problem ill-posed and we assume that some other data is
known such as:
\begin{itemize}
\item The normal stress in some part of the boundary $\Gamma_N
  \subset \partial \Omega$ and $\Gamma_N \cap \Gamma_D \ne \emptyset$,
\begin{equation}\label{cauchy}
(- n \cdot \nabla u + pn) \cdot n = \psi.
\end{equation}
We will refer to this problem as the \emph{ Cauchy problem } below.
\item The measured value of $(u,p)$ in some subdomain $\omega \subset
  \Omega$. We will refer to this problem as the \emph{data
    assimilation problem} below.
\end{itemize}
In the first case it is known that if a solution exists, then $g_D =
\psi = 0$ implies $u=0$, $p=0$ in $\Omega$ by unique continuation \cite{FL96},
however, no quantitiative estimates appear to exist in the literature
for the pure Cauchy problem;
see \cite{BEG13} for results using additional measurements on the
boundary. In the second case stability may be proven in the form of a
three balls inequality and associated local stability estimates, see
\cite{LUV10,BEG13}. For completeness of the analysis we focus on the second case
for the error estimates below. In particular we consider the case
where no data are known on the boundary, i.e. $\Gamma_D = \Gamma_N = \emptyset$.
In the data assimilation case the following Theorem from \cite{LUV10}
provides us with a conditional stability estimate. Assuming an optimal conditional stability estimate for the Cauchy problem in
the spirit of \cite{ARRV09}, it is straightforward to
extend the anaysis to this case
following \cite{Bu15}. 

\begin{theorem}\label{thm:3sphere}(Conditional stability for the
  Stokes' problem)
There exists a positive number $\tilde R<1$ such that if
$0<R_1<R_2<R_3\leq R_0$ and $R_1/R_3<R_2/R_3<\tilde R$, then if
$B_{R_0} (x_0) \subset \Omega$
\[
\int_{B _{R_2}(x_0)} |u|^2 ~\mbox{d}x \leq C \left( \int_{B _{R_1}(x_0)} |u|^2 ~\mbox{d}x\right)^{\tau} \left(\int_{B_{R_3}(x_0)} |u|^2 ~\mbox{d}x \right)^{1-\tau}
\]
for $(u,p) \in [H^1(B_{R_0}(x_0))]^{d+1}$, satisfying \eqref{stokes} with
$\mathfrak{f}=\mathfrak{g}=0$  in $B_{R_0}(x_0)$, where the constant $C$ depends on $R_2/R_3$
and $0<\tau<1$ depends on $R_1/R_3$, $R_2/R_3$ and $d$. For fixed
$R_2$ and $R_3$, the exponent $\tau$ behaves like $1/(-\log(R_1))$
when $R_1$ is sufficiently small.
\end{theorem}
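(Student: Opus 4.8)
The plan is to decouple the Stokes system into scalar constant-coefficient elliptic equations and then run the classical Carleman-estimate route to a three-sphere inequality. First I would eliminate the pressure. Taking the divergence of the momentum equation $-\Delta u + \nabla p = 0$ and using $\nabla\cdot u = 0$ (since $\mathfrak{g}=0$) gives $\Delta p = 0$, so $p$ is harmonic in $B_{R_0}(x_0)$. Applying the Laplacian to the momentum equation then yields $\Delta^2 u = \nabla \Delta p = 0$, so each component of $u$ is biharmonic. This reduces the coupled problem to the scalar operators $\Delta$ (for $p$) and $\Delta^2$ (for $u$), for which interior elliptic regularity \eqref{eq:elliptic_reg} supplies all the derivative control needed below.

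The core tool is a Carleman estimate. Taking $x_0 = 0$ by translation, I would use a radial weight $\phi(x) = \phi(|x|)$ of logarithmic type, built from $-\log|x|$ and convexified in the variable $t = \log|x|$ so that the conjugated symbol of $\Delta^2$ satisfies Hörmander's subellipticity/pseudoconvexity condition on the annulus. This produces, for a large parameter $\lambda$, a weighted inequality controlling a weighted $L^2$ norm of $v$ and its derivatives up to order three by the weighted $L^2$ norm of $\Delta^2 v$, uniformly over all $v$ compactly supported in $B_{R_3}\setminus \bar B_{R_1}$. (An analogous, easier estimate for $\Delta$ handles the pressure, but the velocity estimate is what drives the result.)

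I would then localize by choosing a smooth cutoff $\chi$ equal to $1$ on $B_{R_2}$, supported in $B_{R_3}$, and vanishing near $B_{R_1}$, and apply the estimate to $v = \chi u$. Since $\Delta^2 u = 0$, the right-hand side reduces to commutator terms $[\Delta^2,\chi]u$, supported only in the two thin shells near $\partial B_{R_1}$ and $\partial B_{R_3}$ where $\chi$ varies. Using the radial monotonicity of $e^{2\lambda\phi}$ to bound the weight by its boundary values on each shell, bounding the derivatives in the commutators by $L^2$ norms of $u$ on slightly larger balls via \eqref{eq:elliptic_reg}, and dividing through by the weight on $B_{R_2}$, I obtain an inequality of the schematic form
\[
\int_{B_{R_2}}|u|^2\,\mathrm{d}x \;\lesssim\; e^{2\lambda A}\int_{B_{R_1}}|u|^2\,\mathrm{d}x \;+\; e^{-2\lambda B}\int_{B_{R_3}}|u|^2\,\mathrm{d}x,
\]
with $A = \phi(R_1)-\phi(R_2)>0$ and $B = \phi(R_2)-\phi(R_3)>0$.

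Finally I would optimize in $\lambda$. Balancing the two terms by setting $e^{2\lambda(A+B)}$ equal to the ratio of the two ball integrals (a choice admissible once $\lambda$ exceeds the Carleman threshold $\lambda_0$; the complementary regime is trivial) yields the product bound with exponent $\tau = B/(A+B) = \log(R_3/R_2)/\log(R_3/R_1)$, which for fixed $R_2, R_3$ behaves like $1/(-\log R_1)$ as $R_1 \to 0$, matching the stated asymptotics. The main obstacle is the Carleman estimate itself: the pressure coupling forces the reduction to $\Delta^2$, and a naive logarithmic weight fails pseudoconvexity near the origin, so the convexification must be arranged carefully to produce usable constants $A, B$ and to pin down the admissible range $R_2/R_3 < \tilde R$ with the sharp exponent.
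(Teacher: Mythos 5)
The paper does not actually prove Theorem \ref{thm:3sphere}: its ``proof'' is a one-line citation to Lin, Uhlmann and Wang \cite{LUV10}, and the result is imported as a black box. Measured against that, your sketch correctly identifies the strategy of the cited proof --- eliminate the pressure using $\nabla\cdot u=0$ to get $\Delta p=0$, run a Carleman estimate with a convexified logarithmic weight on an annulus, cut off, and optimize the large parameter to get $\tau=\log(R_3/R_2)/\log(R_3/R_1)\sim 1/(-\log R_1)$ --- and your bookkeeping of the exponents $A$, $B$ and the final optimization is right. Two caveats. First, the entire difficulty of the theorem is concentrated in the Carleman estimate you posit in one sentence: obtaining an $L^2$-to-$L^2$ three-ball inequality (no derivatives on either side) with the \emph{optimal} exponent requires a carefully convexified weight of the form $\log|x|$ plus a correction, and verifying pseudoconvexity there is the substance of \cite{LUV10}; as written your argument is an outline of that proof, not a replacement for it. Second, the route differs in one implementation detail: \cite{LUV10} keeps the problem as the weakly coupled second-order system $\Delta p=0$, $\Delta u=\nabla p$ and applies scalar Carleman estimates to each equation, absorbing the coupling, rather than passing to $\Delta^2 u=0$. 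Your fourth-order reduction is legitimate for constant coefficients, but it forces you to establish a sharp Carleman estimate for $\Delta^2$ directly (pseudoconvexity for fourth-order operators is more delicate, and naive iteration of the second-order estimate tends to degrade the exponent), whereas the weakly coupled formulation also survives the lower-order perturbations and variable coefficients treated in the reference. Neither caveat is an error, but you should be explicit that the heavy lifting is being delegated to the Carleman machinery exactly as the paper delegates it to \cite{LUV10}.
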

\begin{proof}
For the proof we refer to  \cite{LUV10}.
\end{proof}
In the data
assimilation problem corresponding to Theorem \ref{thm:3sphere} measured data $u_M:\omega \mapsto \mathbb{R}^d$ are
available in $\omega$ such that $u_M$ satisfies \eqref{stokes} in
$\omega$ and there exists $u$ defined on $\Omega$ satisfying
\eqref{stokes} such that $u\vert_\omega = u_M$. Our objective is to
design a method for the reconstruction of $u$, given $\tilde u_M := u_M + \delta u$,
where $\delta u \in [L^2(\omega)]^d$ is a perturbation of the exact data resulting from measurement
error or interpolation of pointwise measurements inside
$\omega$. Observe that the considered configuration is also closely
related to a pure boundary control problem, where we look for data on
the boundary such that $u = u_M$ in the subset $\omega$.

We will first cast the problem \eqref{stokes}, with the notation $\mathfrak{f}=f$ and with $\mathfrak{g}=0$, on weak form. For the derivation of the weak formulation we introduce the spaces
$
V:= \{v \in [H^1(\Omega)]^d \}$ and
$W:= \{v \in [H^1_0(\Omega)]^d\}
$ for velocities and $Q:=L^2(\Omega)$ and $Q_0:=L^2_0(\Omega)$, where
the zero--subscript in the second case as usual indicates that the functions
have zero integral over $\Omega$.

We may the multiply the first equation of \eqref{stokes} by $w \in W$
and first integrate over $\Omega$ and then apply Green's formula to obtain
\[
\int_\Omega \nabla u:\nabla w~\mbox{d}x - \int_{\Omega} p \nabla \cdot w
~\mbox{d}x = \int_\Omega f w ~\mbox{d}x,\quad \forall w \in W
\]
similarly we may multiply the second equation by $q \in L^2(\Omega)$
and integrate over $\Omega$ to get
\[
\int_{\Omega} q \nabla \cdot u
~\mbox{d}x = 0.
\]
Introducing the forms
\[
a(u,w) := \int_\Omega \nabla u:\nabla w~\mbox{d}x ,
\]
\[
b(p,w) = -\int_{\Omega} p \,\nabla \cdot w
~\mbox{d}x 
\]
and 
\[
l(w) :=  \int_\Omega f w ~\mbox{d}x 
\]
we may formally write the problem as: find $(u,p) \in V \times Q_0$ such that $u
\vert_{\omega} = u_M$ and
\begin{align}
a(u,w) + b(p,w) &= l(w),\quad \forall w \in W \\
b(y,u) & = 0,\quad \forall y \in Q.
\end{align}
Observe that this problem is ill-posed. In particular observe that we are not allowed to
test with $w=u$ because of the homogeneous Dirichlet conditions set on
the functions in $W$. To regularize the problem we cast it on the form of a
minimization problem, first writing
\[
A[(u,p),(w,y)] := a(u,w) + b(p,w) - b(y,u)
\]
and then introducing the Lagrangian
\[
\mathcal{L}[(u,p),(z,x)] := \frac12 m(u - \tilde u_M,u - \tilde u_M) + A[(u,p),(z,x)]-l(z),
\]
where $m(\cdot,\cdot)$ is a bilinear form that depends on what data we
wish to integrate. For the data assimilation problem that is our main concern we
simply have
\[
m(u,v) := \gamma_M\int_{\omega} uv ~\mbox{d}x,
\]
where $\gamma_M>0$ is a free parameter. We will also use the notation
\[
(u,v)_\omega  := \int_{\omega} uv ~\mbox{d}x.
\]
The optimality system of the associated constrained minimization
problem takes the form
\begin{align}\label{eq:min1}
A[(u,p),(w,y)] & = l(w) \\
A[(v,q),(z,x)] + m(u,v) & = m(\tilde u_M,v). \label{eq:min2}
\end{align}
This problem is ill-posed in general, but in the data assimilation
case we know that if a solution exists and $l(w)=0$
then this solution must satisfy the conditional stability of Theorem
\ref{thm:3sphere}. A consequence of this is that if the system admits a solution
$(u,p) \in V \times L^2(\Omega)$ for the exact
data $u_M$, then this solution is unique. To show this assume that
there are two solutions $u_1\in V$ and $u_2\in V$ that solve
\eqref{eq:min1}--\eqref{eq:min2},
then $v = u_1-u_2 \in V$ solves the homogenous Stokes' equation and has
$v\vert_\omega = 0$ and the uniqueness is a consequence of unique
continuation based on Theorem \ref{thm:3sphere}. Below we will assume that there
exists a unique solution $(u,p) \in [H^2(\Omega)]^d \times
H^1(\Omega)$ that satisfies \eqref{stokes} in $\Omega$ with $u = u_M$
in $\omega$.

\section{The nonconforming stabilized method}\label{fem}
Let $\{\mathcal{T}_h\}_h$ denote a family of shape regular and quasi uniform
tesselations of $\Omega$ into nonoverlapping simplices, such that for
any two different simplices $\kappa$, $\kappa' \in \mathcal{T}_h$, $\kappa \cap
\kappa'$ consists of either the empty set, a common face or a common
vertex. The outward pointing normal of a simplex $\kappa$ will be
denoted $n_{\kappa}$. We denote the set of element faces in $\mathcal{T}_h$ by
$\mathcal{F}$ and let $\mathcal{F}_i$ denote the set of interior faces $F$ in
$\mathcal{F}$.
To each face
$F$ we associate a unit normal vector, $n_F$. For interior faces its orientation is arbitrary, but fixed. On the boundary
$\partial \Omega$ we identify $n_F$ with the outward pointing normal
of $\Omega$.
We define the jump over interior faces $F \in
\mathcal{F}_i$ by $[v]\vert_F:= \lim_{\epsilon \rightarrow
  0^+} (v(x\vert_F- \epsilon n_F) - v(x\vert_F+ \epsilon n_F))$
and for faces on the boundary, $F \in \partial \Omega$, we let
$[v]\vert_F := v \vert_F$. Similarly we define the average of a function over
an interior
face $F$ by $\{v \}\vert_F := \tfrac12 \lim_{\epsilon \rightarrow
  0^+} (v(x\vert_F- \epsilon n_F) + v(x\vert_F+ \epsilon n_F))$ and
for $F$ on the boundary we define $\{v \}\vert_F := v \vert_F$.
The classical nonconforming space of piecewise affine
finite element functions (see \cite{CR73}) then reads
$$
X_h := \{v_h \in L^2(\Omega): \int_{F} [v_h]~
\mbox{d}s = 0,\, \forall F \in \mathcal{F}_i
\mbox{ and } v_h\vert_{\kappa} \in \mathbb{P}_1(\kappa),\,
\forall \kappa \in \mathcal{T}_h \}
$$
where $\mathbb{P}_1(\kappa)$ denotes the set of polynomials of degree less than
or equal to one restricted to the element $\kappa$, 
and with homogeoneous Dirichlet boundary conditions
$$
X_h^{0} := \{v_h \in L^2(\Omega): \int_{F} [v_h]~
\mbox{d}s = 0,\, \forall F \in \mathcal{F}
\mbox{ and } v_h\vert_{\kappa} \in \mathbb{P}_1(\kappa),\,
\forall \kappa \in \mathcal{T}_h \}.
$$
We may then define the spaces $V_h := [X_h]^d$ and $W_h :=
[X_h^{0}]^d$. For the pressure spaces we define
\[
Q_h := \{q_h \in L^2(\Omega): q\vert_\kappa \in \mathbb{R}, \forall \kappa \in
\mathcal{T}_h \} \mbox{ and } Q_h^0 := Q_h \cap L^2_0(\Omega).
\]
To make the notation more compact we introduce the composite spaces
$\mathcal{V}_h:= V_h \times Q_h^0$ and $\mathcal{W}_h:= W_h \times Q_h$.

\subsection{Finite element formulation}
By writing the equations \eqref{eq:min1}--\eqref{eq:min2} with
arguments in the discrete spaces, the formulation may now naively be written:
 find $(u_h,p_h) \times (z_h,x_h) \in \mathcal{V}_h \times
 \mathcal{W}_h$ such that,
\begin{align}\label{eq:FEMmin1}
A_h[(u_h,p_h),(w_h,y_h)] & = l(w) \\
A_h[(v_h,q_h),(z_h,x_h)] +m(u_h,v_h) & = m(\tilde u_M,v_h). \label{eq:FEMmin2}
\end{align}
for all $(v_h,q_h) \times (w_h,y_h) \in \mathcal{V}_h \times
 \mathcal{W}_h$. The discrete bilinear form is defined by
\begin{equation}\label{Compactform}
A_h[(u_h,p_h),(w_h,y_h)] := a_h(u_h,w_h) +b_{h}(p_h,w_h) - b_h(y_h,u_h)
\end{equation}
where the forms are defined by
\[
a_h(u_h,w_h) = \sum_{\kappa \in \mathcal{T}_h} \int_\kappa \nabla u_h:
\nabla w_h ~\mbox{d}x,
\]
\[
b_{h}(p_h,w_h) = -\sum_{\kappa \in \mathcal{T}_h} \int_{\kappa} p_h \,\nabla \cdot w_h
~\mbox{d}x.
\]
To obtain a stable formulation we need to add stabilizing terms. This
can be done in several different ways, resulting in different methods
with different stability, accuracy and conservation properties. Our
choice herein has been guided by the principle that stabilization
is added only if it is necessary for accuracy and has minimal
influence
on the conservation properties of the scheme. We will also comment on
some variants.
For the primal velocities we suggest to use the standard jump stabilization
that has been shown to stabilize the Crouzeix-Raviart element in a
number of applications \cite{HL02,HL03,BH05},
\begin{equation}\label{vel_stab}
s_{j,t}(u_h,v_h)  := \sum_{F \in \mathcal{F}_i} \int_{F} 
h_F^{t} [u_h][v_h] ~\mbox{d}s.
\end{equation}
For the pressure on the other hand we propose to use the following
weak penalty term
\begin{equation}\label{pres_stab}
s_{p,t}(p_h,q_h) := \int_\Omega h^t p_h q_h ~\mbox{d}x.
\end{equation}
We also propose the compact form: find $(U_h,Z_h) \in \mathcal{V}_h
\times \mathcal{W}_h$, where $U_h := (u_h,p_h) \in V_h \times Q_h^0$
and  $Z_h := (z_h,x_h) \in W_h \times Q_h$, such that,
\begin{equation}\label{FEM:compact}
\mathcal{A}_h[(U_h,Z_h),(X_h,Y_h)] + \mathcal{S}_h[(U_h,Z_h),(X_h,Y_h)] + m(u_h,v_h) = l(w_h) + m(\tilde u, v_h)
\end{equation}
for all $(X_h,Y_h) \in \mathcal{V}_h
\times \mathcal{W}_h$, $X_h:=(v_h,q_h)$ and $Y_h:=(w_h,y_h)$. The bilinear forms are then
given by
\begin{equation}\label{Aglobal}
\mathcal{A}_h[(U_h,Z_h),(X_h,Y_h)] := A_h[(u_h,p_h),(w_h,y_h)]+A_h[(v_h,q_h),(z_h,x_h)]
\end{equation}
and 
\begin{equation}\label{Sglobal}
\mathcal{S}_h[(U_h,Z_h),(X_h,Y_h)]:= S_p[(u_h,p_h),(v_h,q_h)]-S_a[(z_h,x_h),(w_h,y_h)],
\end{equation}
where $S_a$ and $S_p$ are positive semi-definite, symmetric bilinear
forms. 
In the following, we shall also make use of the following bilinear form
\begin{align}
\mathcal{G}[(U_h,Z_h),(X_h,Y_h)]:= &{} \mathcal{A}_h[(U_h,Z_h),(X_h,Y_h)]  \\
& {} +\mathcal{S}_h[(U_h,Z_h),(X_h,Y_h)] + m(u_h,v_h) .\nonumber
\end{align}

The precise design of the regularization is
  problem dependent. For the Cauchy problem, the velocities must be
stabilized both for the forward and the adjoint problems. This is not
necessary in the data
assimilation case, where the stabilizing terms takes the form
\begin{equation}\label{eq:stab_p}
S_p[(u_h,p_h),(v_h,q_h)] := \gamma_u s_{j,-1}(u_h,v_h) + \gamma_p
s_{p,2}(p_h,q_h),\, \gamma_u >0,\, \gamma_p \ge 0
\end{equation}
and
\begin{equation}\label{eq:stab_a}
S_a[(z_h,x_h),(w_h,y_h)] := \gamma_x s_{p,0}(x_h,y_h),\, \gamma_x \ge 0.
\end{equation}
Observe that the minimal stabilization that allows for optimal error
estimates is $\gamma_u>0$, $\gamma_p = \gamma_x =0$. In the analysis
below we will focus on this case, noting that the case with added
pressure stabilization follows in a similar way, but is slightly more
elementary. From the theoretical point of view the choice $\gamma_p >
0$ has no detrimental effect, neither on conservation nor on the
accuracy of the primal solution. The choice $\gamma_x >0$ 
on the other hand perturbs both local and global conservation, but
still allows for optimal error estimates. The interest of the addition
of the pressure stabilization stems from the possibility of
eliminating the pressure and we briefly discuss the resulting
formulation before proceeding with the analysis.

\subsection{Elimination of the pressure}
Consider the dual mass conservation equation in the 
formulation \eqref{FEM:compact} with the stabilization given by
\eqref{eq:stab_p} and \eqref{eq:stab_a} and $\gamma_p>0$,
\[
b(q_h,z_h) + s_{p,2}(p_h,q_h) = 0, \forall q_h \in Q_h^0.
\]
Observing that $\gamma_p^{-1} h^{-2} \nabla \cdot w_h \in Q_h^0$ we
may eliminate the physical pressure from the formulation, since
\begin{multline*}
b(p_h,w_h) = -(p_h,\nabla\cdot w_h)_h = -s_{p,2}(p_h,\gamma_p^{-1} h^{-2}
\nabla \cdot w_h) =  b(\gamma_p^{-1} h^{-2}
\nabla \cdot w_h,z_h) \\
=  (\gamma_p^{-1} h^{-2}
\nabla \cdot w_h, \nabla \cdot z_h)_h.
\end{multline*}
Similarly, for $\gamma_x > 0$ the dual pressure $x_h$ may be
eliminated. Starting from the mass conservation equation
\[
-b(y_h,u_h) - s_{x,0}(x_h,y_h) = 0
\]
we use that $y_h = \nabla \cdot v_h$ is a valid test function to deduce
\begin{equation*}
-b(x_h,v_h) = -(x_h,\nabla\cdot v_h)_h = s_{x,0}(x_h,\nabla \cdot v_h) = -b(
\nabla \cdot v_h,u_h) 
= (\nabla \cdot v_h, \nabla \cdot u_h)_h.
\end{equation*}
The resulting formulation is an equal order interpolation formulation
for the Stokes' system using the nonconforming Crouzeix-Raviart
element for both the forward and the dual system. Find $(u_h,z_h) \in
V_h \times W_h$ such that
\begin{align} \label{BD_method}
a_h(u_h,w_h) - (\gamma_p^{-1} h^{-2}
\nabla \cdot w_h, \nabla \cdot z_h)_h & = l(w_h) \\
a_h(z_h,v_h) + (\nabla \cdot u_h, \nabla \cdot v_h)_h +
  s_{j,-1}(u_h,v_h) + m(u_h,v_h) & =
                                                                   m(\tilde
                                   u_M,v_h) \nonumber
\end{align}
for all  $(v_h,w_h) \in
V_h \times W_h$. We identify this scheme as a discretization of the continuous
regularization of the Stokes' Cauchy problem proposed in
\cite{BD14}. It follows that the analysis below also covers that
method in the special case that the discretization uses the
nonconforming space $X_h$.
\subsection{Technical Lemmas}
We will end this section by proving some elementary Lemmas that will
be useful in the analysis below. We will use $\|\cdot\|_X$ to denote
the $L^2$--norm over $X$, subset of $\mathbb{R}^d$ or $\mathbb{R}^{d-1}$.

We recall the interpolation operator $r_h: [H^1(\Omega)]^d
\rightarrow [X_h]^d$ defined by the (component wise) relation 
\[
\overline{\{r_h v\}}\vert_F := |F|^{-1} \int_F \{r_h v\}~\mbox{d}s
=  |F|^{-1}  \int_F  v ~\mbox{d}s 
\]
for every $F \in \mathcal{F}$ and with $|F|$ denoting the $(d-1)$-measure
of $F$.
It is conventient to introduce the broken scalar
  product 
\[
(x,y)_h := \sum_{\kappa \in \mathcal{T}_h} \int_\kappa x y ~\mbox{d}x,
\]
with the associated norms 
\[
\|x\|_h^2 := (x,x)_h \mbox{
  and } \|x\|_{1,h}^2 := \|x\|_h^2+a_h(x,x).
\]
The following inverse and trace inequalities are well known
\begin{equation}\label{trace}
\begin{array}{rcl}
\|v\|_{\partial \kappa} \leq C_t (h^{-\frac12} \|v\|_{\kappa} +
h^{\frac12} \|\nabla v\|_{\kappa}), \quad\forall v \in H^1(\kappa), \\[3mm]
h_{\kappa}\|\nabla v_h\|_{\kappa} + h_\kappa^{\frac12} \|v_h\|_{\partial \kappa}
\leq C_i \|v_h\|_\kappa, \quad \forall v_h \in X_h.
\end{array}
\end{equation}
Using the inequalities of \eqref{trace} and standard approximation
results from \cite{CR73} it is straightforward to show the following
approximation results of the interpolant $r_h$
\begin{equation}\label{eq:approx}
\begin{array}{rcl}
\|u - r_h u\|_\Omega    +h \|\nabla (u - r_h u)\|_h &\leq &C h^{t} |u|_{H^t(\Omega)}\\[3mm]
 \|h^{-\frac12} ( u - r_h
u) \|_{\mathcal{F}} + \|h^{\frac12} \nabla (u - r_h u) \cdot n_F
\|_{\mathcal{F}} &\leq &C h^{t-1} |u|_{H^t(\Omega)}
\end{array}
\end{equation}
where $t \in \{1,2\}$. It will also be useful to bound the $L^2$-norm
of the interpolant $r_h$ by its values on the element faces. To this
end we prove a technical lemma.
\begin{lemma}\label{lem:invtrace}
For any function $v_h \in X_h$ there holds
\[
\|v_h\|_\Omega \leq c_{\mathcal{T}}\left( \sum_{F \in \mathcal{F}} h_F \|\overline{
  \{v_h\}}\|^2_F \right)^{\frac12}
\]
\end{lemma}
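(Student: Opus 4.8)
The plan is to localize the estimate to a single simplex and then exploit the fact that a Crouzeix--Raviart function is uniquely determined on each element by its $d+1$ face averages. First I would write $\|v_h\|_\Omega^2 = \sum_{\kappa\in\mathcal{T}_h}\|v_h\|_\kappa^2$ and record two observations. Since $v_h|_\kappa$ is affine, the functionals $\overline{v_h}|_F := |F|^{-1}\int_F v_h\,\mathrm{d}s$ taken over the $d+1$ faces $F\subset\partial\kappa$ form a unisolvent set for $\mathbb{P}_1(\kappa)$; this is precisely the unisolvence underlying the element of \cite{CR73}. Moreover, because the functions in $X_h$ satisfy $\int_F[v_h]\,\mathrm{d}s=0$, the two one-sided face averages coincide on each interior face and equal $\overline{\{v_h\}}|_F$, while on boundary faces $\{v_h\}=v_h$; hence the constant appearing on the right-hand side is exactly this common face degree of freedom.

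The key local estimate I would establish is
\[
\|v_h\|_\kappa^2 \;\le\; C\, h_\kappa^{d}\sum_{F\subset\partial\kappa}\bigl(\overline{v_h}|_F\bigr)^2 .
\]
This follows by a standard scaling argument: on a fixed reference simplex $\hat\kappa$ both sides define norms on the finite-dimensional space $\mathbb{P}_1(\hat\kappa)$ -- the right-hand side is a norm precisely by the unisolvence just noted -- so they are equivalent, with a constant depending only on $\hat\kappa$. Mapping back to $\kappa$ through the affine transformation introduces the volume factor $|\det B|\simeq h_\kappa^d$ in the $L^2$ term, while leaving the face averages invariant, since an average is unaffected by the affine pull-back.

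Next I would convert the elementwise factor $h_\kappa^d$ into the face weight $h_F$. Shape regularity gives $h_F\simeq h_\kappa$ and $|F|\simeq h_\kappa^{d-1}$ for every face of $\kappa$, whence $h_\kappa^d\simeq h_F|F|$, and, since $\overline{\{v_h\}}$ is constant on $F$, one has $h_\kappa^d\bigl(\overline{v_h}|_F\bigr)^2 \lesssim h_F|F|\bigl(\overline{\{v_h\}}\bigr)^2 = h_F\|\overline{\{v_h\}}\|_F^2$. Summing over all elements and noting that each interior face is shared by exactly two simplices -- so it is counted twice, a factor absorbed into the constant -- yields
\[
\|v_h\|_\Omega^2 \;\le\; c_{\mathcal{T}}^2 \sum_{F\in\mathcal{F}} h_F\,\|\overline{\{v_h\}}\|_F^2 ,
\]
and taking square roots gives the claim.

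The only genuinely nontrivial ingredient is the local norm equivalence on $\mathbb{P}_1(\hat\kappa)$, which rests entirely on the unisolvence of the face-average functionals; everything else is bookkeeping with the scaling relations furnished by shape regularity. I would expect no real obstacle beyond taking care to identify the one-sided averages with $\overline{\{v_h\}}$ via the mean-continuity constraint defining $X_h$.
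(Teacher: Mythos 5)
Your proof is correct and follows essentially the same route as the paper's: a local norm equivalence on the reference simplex justified by the unisolvence of the face-average degrees of freedom, a scaling argument under shape regularity to produce the weight $h_F$, summation over elements, and the identification $\overline{v}_h|_F=\overline{\{v_h\}}|_F$ from the mean-continuity constraint defining $X_h$. You simply spell out the intermediate steps (the factor $h_\kappa^d\simeq h_F|F|$ and the double-counting of interior faces) that the paper leaves implicit.
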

\begin{proof}
It follows by norm equivalence of discrete
spaces on the reference element and a scaling argument (under the
assumption of shape regularity) that for all
$\kappa \in \mathcal{T}_h$
\begin{equation}\label{eq:inv_trace}
 \|v_h\|^2_{\kappa} \leq C \sum_{\substack{F \in \partial \kappa }}
h_F \|\overline{
  v}_h\|_F^2.
\end{equation}
The claim follows by summing over the elements of $\mathcal{T}_h$ and
recalling that $\|\overline{v}_h\|_F^2 = \|\overline{\{v_h\}}\|_F^2 $.
\end{proof}

For the analysis below we also need a quasi-interpolation operator
that maps piecewise linear, nonconforming functions into the space of
piecewise linear conforming functions. 
Let $I_\text{cf} [Q_h]^d \cup V_h \mapsto V \cap
V_h$ denote the quasi interpolation of $u_h$ into $V_h \cap V$, \cite{HW96,ABC03,KP03} such that
\[
\| I_\text{cf} u_h - u_h \|_\Omega+ h \|\nabla(I_\text{cf} u_h - u_h)\|_{h} \lesssim \|h^{\frac12} [u_h]\|_{\mathcal{F}_i}
\]
and for $v_h \in X_h$
\begin{equation}\label{disc_i1}
\|I_\text{cf} \nabla v_h - \nabla v_h\|_h \lesssim \|h^{\frac12}[\nabla v_h]\|_{\mathcal{F}_i}. 
\end{equation}
Below, the global conservation properties of this operator will be
important and we therefore propose the following perturbed
variant that satisfies a global conservation property.
We define the modified interpolant by
\begin{equation}\label{confcons_ip}
\tilde u_h :=  I_\text{cf} u_h + d_h 
\end{equation}
where the perturbation $d_h \in V_h \cap V$ is the solution to the following
constrained problem, $\bar p \in \mathbb{R}$
\begin{equation}\label{d_eq}
\begin{array}{rcl}
(d_h,w_h)_h+(\nabla d_h,\nabla w_h)_h + (\bar p,\nabla \cdot w_h)_h &=& 0 \\
(\nabla \cdot d_h, \bar q)_h & = & (-\nabla \cdot I_\text{cf} u_h, \bar q)_h,
\end{array}
\end{equation}
for all $(w_h,\bar q) \in (V_h \cap V) \times
\mathbb{R}$. This implies that
\[
\int_{\partial \Omega} \tilde u_h \cdot n ~\mbox{d}s = \int_{\Omega}
\nabla \cdot \tilde u_h ~\mbox{d}x = |\Omega| \overline{\nabla \cdot (I_\text{cf}
  u_h+d_h)} =  0
\]
with $|\Omega|$ denoting the $d$-measure of $\Omega$ and
\[
\overline{\nabla \cdot I_\text{cf}
  u_h} := |\Omega|^{-1} \int_{\Omega} \nabla \cdot I_\text{cf}
  u_h ~\mbox{d}x.
\]

\begin{lemma}\label{lem:is_conf}
The problem \eqref{d_eq} is well-posed and the
solution satisfies
\[
\|d_h\|_{H^1(\Omega)} \leq \|\overline{\nabla \cdot I_\text{\rm cf}
  u_h}\|_\Omega \lesssim \|h^{-\frac12} [u_h]\|_{\mathcal{F}_i} +
\|\overline{\nabla \cdot u_h}\|_h
\]
\end{lemma}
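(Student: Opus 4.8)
The plan is to read \eqref{d_eq} as a symmetric saddle-point (Brezzi) system posed on $(V_h\cap V)\times\mathbb{R}$, in which the ``pressure'' space is only one-dimensional. Writing $A(d_h,w_h):=(d_h,w_h)_h+(\nabla d_h,\nabla w_h)_h$ and $B(w_h,\bar q):=(\bar q,\nabla\cdot w_h)_h=\bar q\int_\Omega\nabla\cdot w_h~\mbox{d}x$, I observe that every function in $V_h\cap V$ is globally continuous, so the broken norm coincides with the full norm there and $A(w_h,w_h)=\|w_h\|_{H^1(\Omega)}^2$; hence $A$ is coercive on $V_h\cap V$ with unit constant and continuous with unit norm. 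The first real task is to establish an inf--sup condition for $B$ that is uniform in $h$.

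For that inf--sup bound I would equip the constant space $\mathbb{R}$ with the $L^2(\Omega)$ norm and test with the fixed, mesh-independent field $w_h(x)=x$, which is globally linear and hence lies in $V_h\cap V$. Since $\int_\Omega\nabla\cdot w_h~\mbox{d}x=d\,|\Omega|\neq 0$, this single competitor gives $\sup_{w_h\neq 0}|B(w_h,\bar q)|/\|w_h\|_{H^1(\Omega)}\gtrsim\|\bar q\|_\Omega$ with a constant depending only on $\Omega$ and $d$, i.e. the required uniform inf--sup. Brezzi's theorem then yields existence and uniqueness of $(d_h,\bar p)$, that is, well-posedness of \eqref{d_eq}, together with the primal bound $\|d_h\|_{H^1(\Omega)}\lesssim\sup_{0\neq\bar q\in\mathbb{R}}(-\nabla\cdot I_\text{cf}u_h,\bar q)_h/\|\bar q\|_\Omega=\|\overline{\nabla\cdot I_\text{cf}u_h}\|_\Omega$, which is the first inequality. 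Equivalently one may note that $d_h$ is the minimum-$H^1$-norm element of the affine set $\{w_h\in V_h\cap V:\int_\Omega\nabla\cdot w_h~\mbox{d}x=-\int_\Omega\nabla\cdot I_\text{cf}u_h~\mbox{d}x\}$ and test the bound with the explicit competitor $\alpha\,x$, $\alpha=-\overline{\nabla\cdot I_\text{cf}u_h}/d$.

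For the second inequality I would split the flux, with all divergences taken elementwise, as $\int_\Omega\nabla\cdot I_\text{cf}u_h~\mbox{d}x=\int_\Omega\nabla\cdot u_h~\mbox{d}x+(\nabla\cdot(I_\text{cf}u_h-u_h),1)_h$. The first term reproduces $|\Omega|\,\overline{\nabla\cdot u_h}$. For the second I apply Cauchy--Schwarz, $|(\nabla\cdot(I_\text{cf}u_h-u_h),1)_h|\le|\Omega|^{\frac12}\|\nabla(I_\text{cf}u_h-u_h)\|_h$, and then the approximation property of the conforming quasi-interpolant, $\|\nabla(I_\text{cf}u_h-u_h)\|_h\lesssim\|h^{-\frac12}[u_h]\|_{\mathcal{F}_i}$, obtained by dividing the stated interpolation bound by $h$ and using quasi-uniformity. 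Multiplying throughout by $|\Omega|^{-\frac12}$ converts these into the target norms $\|\overline{\nabla\cdot u_h}\|_\Omega$ and $\|h^{-\frac12}[u_h]\|_{\mathcal{F}_i}$.

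The only delicate point is the degeneracy of the pressure space to $\mathbb{R}$: one must check that the inf--sup constant of $B$ stays bounded below independently of $h$ (secured by the fixed boundary-flux test field $w_h=x$) and that the norm chosen on $\mathbb{R}$ makes the dual norm of the data coincide precisely with $\|\overline{\nabla\cdot I_\text{cf}u_h}\|_\Omega$ rather than some rescaling of it. Once these are in place both estimates reduce to Cauchy--Schwarz and the already quoted interpolation bound, so no further machinery is needed.
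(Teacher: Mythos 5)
Your proposal is correct and follows essentially the same route as the paper: the paper verifies stability by testing \eqref{d_eq} with the combined function $w_h=d_h+\alpha\bar p\,\mathbf{x}$, $\bar q=\bar p$, which is exactly your inf--sup argument with the linear field $\mathbf{x}$ packaged as a single coercivity computation, and it then splits $\overline{\nabla\cdot I_\text{cf}u_h}$ into $\overline{\nabla\cdot(I_\text{cf}u_h-u_h)}+\overline{\nabla\cdot u_h}$ and invokes the same quasi-interpolation bound you use. The only cosmetic discrepancy is that both your argument and the paper's actually deliver the first inequality with a generic constant ($\lesssim$) rather than the constant $1$ displayed in the lemma statement, so nothing is lost relative to the original proof.
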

\begin{proof}
Since the linear system corresponding to \eqref{d_eq} is square,
existence and uniqueness is a consequence of the stability estimate.
Take $w_h = d_h + \alpha \bar p {\bf x}$, with $\alpha>0$, $\bar q =
\bar p$ in \eqref{d_eq} and observe that for
$\alpha$ small enough, there exists $c(\alpha)>0$ such that
\begin{multline}
\|d_h\|^2_{H^1(\Omega)}  + c(\alpha) \|\bar p\|_\Omega^2 \lesssim  \|\overline{\nabla \cdot I_\text{cf}
u_h}\|_\Omega^2 \lesssim \|\overline{\nabla \cdot (I_\text{cf}
u_h - u_h)}\|_h^2+ \|\overline{\nabla \cdot 
 u_h}\|_h^2 \\
\lesssim \|h^{-\frac12} [u_h]\|_{\mathcal{F}_i}^2 + \|\overline{\nabla \cdot 
 u_h}\|_h^2.
\end{multline}
\end{proof}
An immediate consequence of Lemma \ref{lem:is_conf} is that $\tilde
u_h$ satisfies similar approximation estimates as $I_\text{cf}$, but with
improved global conservation. We collect these results, the proof of
which is an immediate consequence of the discussion above, in a corollary.
\begin{corollary}
The conforming approximation $\tilde u_h$ satisfies the discrete estimate,
\begin{equation}\label{cons_interp}
\|\tilde u_h - u_h\|_{1,h} \lesssim  \|h^{-\frac12}
[u_h]\|_{\mathcal{F}_i} + \|\nabla \cdot u_h\|_h
\end{equation}
and has the global conservation property
\[
\int_{\partial \Omega} \tilde u_h \cdot n ~\text{\rm d} s = 0.
\]
\end{corollary}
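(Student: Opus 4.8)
The plan is to prove the corollary as a direct consequence of Lemma~\ref{lem:is_conf} together with the approximation properties already established for $I_\text{cf}$. The corollary makes two claims: an approximation estimate for $\tilde u_h - u_h$ in the $\|\cdot\|_{1,h}$ norm, and the exact global conservation identity. The second claim is already proved verbatim in the display preceding Lemma~\ref{lem:is_conf}, so I would simply restate it by invoking the construction \eqref{d_eq}, which forces $\overline{\nabla\cdot\tilde u_h}=0$ and hence $\int_{\partial\Omega}\tilde u_h\cdot n\,\mathrm{d}s=0$ via the divergence theorem. The real content is the first estimate.

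For the approximation bound, I would write $\tilde u_h - u_h = (I_\text{cf} u_h - u_h) + d_h$ using the definition \eqref{confcons_ip}, and apply the triangle inequality in the $\|\cdot\|_{1,h}$ norm. The first term is controlled directly: since $\|x\|_{1,h}^2 = \|x\|_h^2 + a_h(x,x)$, the $\|\cdot\|_{1,h}$ norm of $I_\text{cf} u_h - u_h$ is comparable to $\|I_\text{cf} u_h - u_h\|_\Omega + \|\nabla(I_\text{cf} u_h - u_h)\|_h$, and both pieces are bounded by $\|h^{\frac12}[u_h]\|_{\mathcal{F}_i}$ via the quasi-interpolation estimates stated just after \eqref{disc_i1}. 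Using shape regularity and quasi-uniformity to absorb the mesh-size factor, $\|h^{\frac12}[u_h]\|_{\mathcal{F}_i} \lesssim \|h^{-\frac12}[u_h]\|_{\mathcal{F}_i}$, which gives the first term of the claimed right-hand side. For the perturbation $d_h$, I would invoke the stability estimate of Lemma~\ref{lem:is_conf} directly, which bounds $\|d_h\|_{H^1(\Omega)}$ — and therefore its $\|\cdot\|_{1,h}$ norm, since $d_h \in V_h \cap V$ is conforming so the broken and standard $H^1$ norms coincide — by $\|h^{-\frac12}[u_h]\|_{\mathcal{F}_i} + \|\overline{\nabla\cdot u_h}\|_h$. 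Finally I would note that $\|\overline{\nabla\cdot u_h}\|_h \leq \|\nabla\cdot u_h\|_h$, since the elementwise average is an $L^2$-projection and hence contractive, matching the second term on the right.

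The only genuinely delicate point is reconciling the mesh-weighted jump norms: the quasi-interpolation estimate naturally produces $\|h^{\frac12}[u_h]\|_{\mathcal{F}_i}$ whereas the corollary and Lemma~\ref{lem:is_conf} are phrased with $\|h^{-\frac12}[u_h]\|_{\mathcal{F}_i}$. I expect this to be the main (though minor) obstacle, and it is resolved purely by quasi-uniformity, which makes $h_F$ comparable to a single global scale $h$ so that the two weightings differ by a bounded power of $h$ that may be absorbed into the hidden constant. Everything else is an assembly of results already available in the excerpt, so no new estimate is required and the proof is short.
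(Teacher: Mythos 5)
Your proposal is correct and follows essentially the same route as the paper, which simply declares the corollary an immediate consequence of the construction \eqref{confcons_ip}, the quasi-interpolation estimates for $I_\text{cf}$, and Lemma~\ref{lem:is_conf}; your decomposition $\tilde u_h - u_h = (I_\text{cf}u_h - u_h) + d_h$ is exactly the intended argument. The only slight imprecision is that the gradient part of the quasi-interpolation estimate directly yields $\|\nabla(I_\text{cf}u_h-u_h)\|_h \lesssim \|h^{-\frac12}[u_h]\|_{\mathcal{F}_i}$ (the $h$-weight sits on the left of that estimate), so the $h^{\pm\frac12}$ reconciliation you flag is only needed for the $L^2$ part and is harmless.
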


Using the regularity assumptions on the data in $l(w)$
it is straightforward to show that the formulation satisfies the
following weak consistency
\begin{lemma}\label{lem:weakcons}(Weak consistency)
Let $(u,p)$ be the solution of \eqref{stokes}, with $f \in L^2(\Omega)$, and let $(u_h,p_h) \in \mathcal{V}$ be
the solution of \eqref{FEM:compact}. Then,  for all $w_h \in W_h$, there holds,
\begin{multline}\label{galortho}
|a_h(u_h - u,w_h) +  b_h(p_h - p, w_h) |\\
\leq \inf_{(\nu_h,\eta_h) \in \mathcal{V}} \sum_{F \in \mathcal{F}}
\int_{F}
| n_F\cdot (\sigma(u,p) - \{\sigma(\nu_h,\eta_h)\}) || [w_h]|
~\mbox{\rm d}s.
\end{multline}
where $\sigma(u,p) := \nabla u - \mathcal{I} p$,
with $\mathcal{I}$ the identity matrix.
\end{lemma}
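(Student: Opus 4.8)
The plan is to establish the identity by comparing the discrete equation satisfied by $(u_h,p_h)$ with an elementwise integration-by-parts identity for the exact solution $(u,p)$, and then to exploit the defining zero-mean jump property of the nonconforming space $W_h$. First I would extract the relevant scalar equation from the monolithic system \eqref{FEM:compact}. Choosing the test function with $X_h=(v_h,q_h)=(0,0)$ and $Y_h=(w_h,y_h)$ with $y_h=0$, every stabilization contribution ($S_p$ and $S_a$ as in \eqref{eq:stab_p}--\eqref{eq:stab_a}) and every $m(\cdot,\cdot)$ term vanishes, as does $b_h(y_h,u_h)$, leaving the clean primal relation
\[
a_h(u_h, w_h) + b_h(p_h, w_h) = l(w_h), \qquad \forall w_h \in W_h.
\]

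Next I would apply Green's formula element by element to $a_h(u,w_h)+b_h(p,w_h)$ for the exact solution. On each $\kappa$ this produces the volume term $\int_\kappa(-\Delta u + \nabla p)\cdot w_h\,\mbox{d}x$, which equals $\int_\kappa f\cdot w_h\,\mbox{d}x$ by \eqref{stokes}, plus a boundary term $\int_{\partial\kappa}(n_\kappa\cdot\sigma(u,p))\cdot w_h\,\mbox{d}s$ with $\sigma(u,p)=\nabla u - \mathcal{I}p$. Summing over $\kappa$ recovers $l(w_h)$ from the volume terms. For the boundary terms, since $(u,p)\in [H^2(\Omega)]^d\times H^1(\Omega)$ the normal flux $n_F\cdot\sigma(u,p)$ is single-valued across each interior face, so the two element contributions on a shared face combine, through the opposite orientations of $n_\kappa$, into $\int_F (n_F\cdot\sigma(u,p))\cdot [w_h]\,\mbox{d}s$; boundary faces are absorbed by the convention $[w_h]=w_h$. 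This yields
\[
a_h(u, w_h) + b_h(p, w_h) = l(w_h) + \sum_{F\in\mathcal{F}}\int_F (n_F\cdot\sigma(u,p))\cdot[w_h]\,\mbox{d}s.
\]

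Subtracting the two identities cancels $l(w_h)$ and gives the error equation with right-hand side $-\sum_{F}\int_F (n_F\cdot\sigma(u,p))\cdot[w_h]\,\mbox{d}s$. The decisive step, and the only genuinely nonconforming one, is the free insertion of a discrete flux: for any $(\nu_h,\eta_h)\in\mathcal{V}_h$ the tensor $\sigma(\nu_h,\eta_h)=\nabla\nu_h-\mathcal{I}\eta_h$ is elementwise constant, so $n_F\cdot\{\sigma(\nu_h,\eta_h)\}$ is a constant vector on each face $F$. The defining property $\int_F[w_h]\,\mbox{d}s=0$ for all $F\in\mathcal{F}$, which encodes both the interelement continuity in the mean and the homogeneous boundary condition of $W_h$, then forces $\int_F (n_F\cdot\{\sigma(\nu_h,\eta_h)\})\cdot[w_h]\,\mbox{d}s=0$. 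Hence this term may be subtracted at no cost.

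Inserting it, taking absolute values under the face integrals, and, since $(\nu_h,\eta_h)$ was arbitrary, passing to the infimum over $\mathcal{V}_h$ yields the claimed bound. I expect the main obstacle to be essentially bookkeeping, namely orienting the face normals correctly so that the interior-face contributions assemble into $[w_h]$ and verifying that the stabilization and coupling terms genuinely drop out for the chosen test functions; the conceptual content lies entirely in matching the face-wise constancy of the discrete flux $n_F\cdot\{\sigma(\nu_h,\eta_h)\}$ against the zero-mean jump condition defining the Crouzeix--Raviart space.
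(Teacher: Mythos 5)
Your proposal is correct and follows essentially the same route as the paper: elementwise integration by parts against the exact solution, subtraction of the discrete primal equation obtained from \eqref{FEM:compact} with the test function $(0,0)\times(w_h,0)$, and insertion of the elementwise-constant flux average $\{\sigma(\nu_h,\eta_h)\}$ at no cost via the zero-mean jump property of $W_h$. You are in fact slightly more explicit than the paper on the two points it leaves implicit, namely that the stabilization and measurement terms drop out for this choice of test function and that $\int_F n_F\cdot\{\sigma(\nu_h,\eta_h)\}\cdot[w_h]\,\mbox{d}s=0$ because the inserted flux is constant on each face.
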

\begin{proof}
Multiplying \eqref{stokes} with $w_h \in W_h$ and integrating by parts we have
\begin{multline}
\int_\Omega f w_h ~\mbox{d}x =-\int_\Omega \nabla \cdot
(\nabla u - \mathcal{I} p)\,\cdot\, w_h ~\mbox{d}x
\\
= -\sum_{\kappa \in \mathcal{T}_h}
\sum_{F \in \partial \kappa} \int_F \sigma(u,p)\,\cdot\, n_{\kappa}\,\cdot\, w_h ~
\mbox{d}s + a_h(u,w_h) + b_h(p, w_h)
\end{multline}
or by rearranging terms
\[
a_h(u,w_h) + b_h(p, w_h) = l(w_h) + \sum_{\kappa \in \mathcal{T}_h}
\sum_{F \in \partial \kappa} \int_F \sigma(u,p)\,\cdot\, n_{\kappa}\,\cdot w_h ~
\mbox{d}s.
\]
Using \eqref{FEM:compact} we obtain
\[
a_h(u_h - u,w_h) + b_h(p_h - p, w_h) 
= -\sum_{\kappa \in \mathcal{T}_h}
\sum_{F \in \partial \kappa} \int_F \sigma(u,p)\, \cdot\, n_{\kappa}\cdot w_h ~
\mbox{d}s.
\]
Since every internal face appears twice with different orientation of
$n_{\kappa}$ we have for all $\nu_h \in V_h$,
\[
\sum_{F \in \partial \kappa} \int_F \sigma(u,p) \cdot n_\kappa \cdot  w_h ~
\mbox{d}s = \sum_{F \in \partial \kappa } \int_F n_\kappa\,\cdot\, (\sigma(u,p) - \{\sigma(\nu_h,\eta_h) \}) \cdot  w_h ~
\mbox{d}s. 
\]
We now observe that by replacing $w_h$ with the jump $[w_h]$ we may write the sum over the
 faces of the mesh, replacing $n_{\kappa}$ by $n_F$. The conclusion follows by
 taking absolute values on both sides and moving the absolute values
 under the integral sign resulting in the desired inequality.
\end{proof}
\begin{lemma}\label{weak_cons_full}
Let $U:=(u,p) \in V \times Q^0$ denote the solution to
\eqref{eq:min1}-\eqref{eq:min2} with $\delta u = 0$. Then there holds
\begin{multline*}
|\mathcal{A}[(U-U_h,Z_h),(X_h,Y_h)] - \mathcal{S}_h[(U_h,Z_h),(X_h,Y_h)] +
m(\tilde u_\mathcal{M} - u_h,v_h)| \\
\leq  \inf_{(\nu_h,\eta_h) \in \mathcal{V}}\sum_{F \in \mathcal{F}}
\int_{F}
| n_F\,\cdot\, (\sigma(u,p) -
\{\sigma(\nu_h,\eta_h)\}) || [w_h]| ~\mbox{d}s
\end{multline*}
for all $(X_h,Y_h):= ([w_,y_h],[v_h,q_h]) \in \mathcal{V}\times \mathcal{W}$.
\end{lemma}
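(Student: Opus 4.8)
The plan is to show that the bracketed left-hand side is exactly the nonconforming consistency residual already estimated in Lemma~\ref{lem:weakcons}, obtained by inserting the continuous solution together with a vanishing dual into the full discrete form and subtracting the discrete equation \eqref{FEM:compact}. First I would record the consequences of the hypothesis $\delta u=0$. Since $\tilde u_M=u_M=u\vert_\omega$, we have $m(u,v_h)=m(\tilde u_M,v_h)$ for every $v_h\in V_h$, so no data mismatch is produced. Testing the continuous optimality system \eqref{eq:min1}--\eqref{eq:min2} shows that the continuous dual variable vanishes, so the continuous primal--dual solution is $(U,0)$. Finally $\nabla\cdot u=\mathfrak g=0$ gives $b_h(y_h,u)=-\int_\Omega y_h\,\nabla\cdot u=0$, hence $A_h[(u,p),(w_h,y_h)]=a_h(u,w_h)+b_h(p,w_h)$, with no contribution from the dual pressure test.

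Next I would evaluate the full form $\mathcal G$ at the pair $(U,0)$. By the integration-by-parts identity from the proof of Lemma~\ref{lem:weakcons}, $a_h(u,w_h)+b_h(p,w_h)=l(w_h)+R(w_h)$, where $R(w_h):=\sum_{\kappa}\sum_{F\in\partial\kappa}\int_F \sigma(u,p)\cdot n_\kappa\cdot w_h\,\mathrm{d}s$ is the nonconforming consistency residual. Because the dual slot is zero, the adjoint block $A_h[\,\cdot\,,(z,x)]$ and the adjoint stabilization $S_a$ drop out, while in the minimal regime ($\gamma_u>0$, $\gamma_p=\gamma_x=0$) one has $S_p[(u,p),\cdot]=\gamma_u s_{j,-1}(u,\cdot)=0$ since $u$ is single-valued across faces. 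Collecting terms yields $\mathcal G[(U,0),(X_h,Y_h)]=l(w_h)+R(w_h)+m(\tilde u_M,v_h)$.

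Then I would subtract the discrete equation \eqref{FEM:compact}, namely $\mathcal G[(U_h,Z_h),(X_h,Y_h)]=l(w_h)+m(\tilde u_M,v_h)$, and use linearity of $\mathcal G$ in its first argument. The forcing, data, and stabilization contributions on the right cancel and I obtain
\[
\mathcal G[(U-U_h,\,-Z_h),(X_h,Y_h)] = R(w_h).
\]
Expanding $\mathcal G$ and regrouping, the velocity/pressure differences reproduce $\mathcal A[(U-U_h,Z_h),(X_h,Y_h)]$ (with the continuous dual $Z=0$ the dual error is $Z-Z_h=-Z_h$, which fixes the relative sign of the adjoint block arising from the minus in the definition of $A_h$), the stabilization gives $-\mathcal S_h[(U_h,Z_h),(X_h,Y_h)]$, and the mass term gives $m(\tilde u_M-u_h,v_h)$. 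Hence the bracketed quantity equals $R(w_h)$; taking absolute values and invoking Lemma~\ref{lem:weakcons}, which bounds $|R(w_h)|=|a_h(u-u_h,w_h)+b_h(p-p_h,w_h)|$ by the stated infimum over $(\nu_h,\eta_h)$, finishes the proof.

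The genuine content is the single observation that the continuous solution satisfies the discrete primal--dual system up to the one nonconforming residual $R(w_h)$; everything else is accounting. The step demanding the most care is tracking the primal--dual sign structure: because $A_h$ is antisymmetric in its off-diagonal $b_h$-blocks and $\mathcal S_h$ carries the adjoint stabilization with a minus sign, one must verify that, after using $\delta u=0$ and the vanishing continuous dual, every discrete contribution except $R(w_h)$ cancels. I would therefore flag explicitly that the identity relies on the unperturbed-data hypothesis and on the minimal-stabilization choice so that $S_p[(u,p),\cdot]$ vanishes.
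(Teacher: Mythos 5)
Your argument is correct and follows the same route as the paper's (one-line) proof: subtract the discrete system \eqref{FEM:compact} from the continuous optimality system, observe that $\delta u=0$ forces the continuous dual to vanish and the data terms to match, note $S_p[(u,p),\cdot]=0$ for single-valued $u$ (and $\gamma_p=0$), and invoke Lemma~\ref{lem:weakcons} for the primal block. The one caveat is the adjoint sign: what you actually establish is the identity with $-Z_h$ in the dual slot of $\mathcal{A}$, and the mismatch with the stated $\mathcal{A}[(U-U_h,Z_h),\cdot]$ is a looseness already present in the paper's formulation (harmless downstream, since $\tn(\cdot,Z_h)\tn$ is invariant under $Z_h\mapsto -Z_h$) rather than something genuinely repaired by ``the minus in the definition of $A_h$''.
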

\begin{proof}
Subtract \eqref{FEM:compact} from
\eqref{eq:min1}-\eqref{eq:min2} and apply Lemma \ref{lem:weakcons} to
the equation for the primal variable $U$.
\end{proof}
\begin{lemma}\label{lem:rhH1proj}
For any $v \in [H^1(\Omega)]^d$, $y \in L^2(\Omega)$ and for all $w_h
\in W_h, \, q_h \in Q_h$ there holds
\[
a_h(v - r_h v, w_h) = 0,\, b_h(q_h, v - r_h v) = 0 \mbox{ and }
 b_h(y - \pi_0 y,w_h) = 0
\]
\end{lemma}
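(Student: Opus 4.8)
The statement collects three orthogonality-type identities for the interpolation operator $r_h$ and the $L^2$-projection $\pi_0$ onto piecewise constants. My plan is to prove each of the three claims by reducing it to the defining property of $r_h$, namely that $r_h v$ matches $v$ in the face averages $|F|^{-1}\int_F v~\mathrm{d}s$ for every $F \in \mathcal{F}$, and to the fact that the relevant discrete quantities are piecewise constant on each element. First I would observe that for $w_h \in W_h = [X_h^0]^d$, the gradient $\nabla w_h$ is constant on each simplex $\kappa$, so on each element $\int_\kappa \nabla(v - r_h v):\nabla w_h~\mathrm{d}x = \nabla w_h|_\kappa : \int_\kappa \nabla(v - r_h v)~\mathrm{d}x$. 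The key step is then to rewrite the element integral of the gradient as a boundary integral via the divergence theorem, turning $\int_\kappa \nabla(v - r_h v)~\mathrm{d}x$ into $\sum_{F \in \partial\kappa} \int_F (v - r_h v)\otimes n_\kappa~\mathrm{d}s$, and to note that each face integral vanishes precisely because $r_h v$ preserves the face average of $v$, i.e. $\int_F (v - r_h v)~\mathrm{d}s = 0$ by the definition of $r_h$.

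For the second identity I would argue similarly: since $q_h \in Q_h$ is constant on each $\kappa$, $b_h(q_h, v - r_h v) = -\sum_\kappa q_h|_\kappa \int_\kappa \nabla\cdot(v - r_h v)~\mathrm{d}x$, and the divergence theorem again converts the element integral to $\sum_{F \in \partial\kappa}\int_F (v - r_h v)\cdot n_\kappa~\mathrm{d}s$, which vanishes face-by-face by the same face-average-preservation property of $r_h$. For the third identity, $b_h(y - \pi_0 y, w_h) = -\sum_\kappa \int_\kappa (y - \pi_0 y)\nabla\cdot w_h~\mathrm{d}x$; here the saving comes from the other factor being piecewise constant, since $\nabla\cdot w_h|_\kappa$ is a constant, and $\int_\kappa (y - \pi_0 y)~\mathrm{d}x = 0$ by the very definition of the $L^2$-projection onto $\mathbb{P}_0$ on each element.

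In all three cases the structural mechanism is the same: exactly one factor in the bilinear form is elementwise constant, so it factors out of the element integral, and what remains is an integral of $v - r_h v$ (either over the element, rewritten as a boundary term, or directly over a face) or of $y - \pi_0 y$, each of which vanishes by a defining moment condition. I expect no genuine obstacle here; the only point requiring a little care is the first identity, where one must be explicit that the divergence theorem is applied componentwise to reduce the element gradient integral to face-average conditions, and that the orientation of $n_\kappa$ is immaterial since the integrand vanishes on each individual face. The other two identities are then immediate variants, and the proof amounts to assembling these three short computations.
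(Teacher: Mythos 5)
Your proposal is correct and follows essentially the same route as the paper: integration by parts elementwise (equivalently, factoring out the elementwise-constant $\nabla w_h$ or $q_h$ and applying the divergence theorem), reducing each identity to face integrals of $v - r_h v$ that vanish by the face-average property of $r_h$, and to the elementwise mean-zero property of $y - \pi_0 y$ for the third identity. No gaps; the argument matches the paper's proof.
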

\begin{proof}
By integration by parts we have, using the orthogonality property on
the faces of $r_h$,
\[
a_h(v - r_h v, w_h) = \sum_{\kappa \in \mathcal{T}_h} \sum_{F
  \in \partial \kappa} \int_F (v - r_h v) \cdot (n_{\kappa} \cdot \nabla w_h
) ~\mbox{d}s = 0,
\]
\[
b_h(q_h,v - r_h v) = \sum_{\kappa \in \mathcal{T}_h} \sum_{F
  \in \partial \kappa} \int_F (v - r_h v) \cdot n_{\kappa} q_h ~\mbox{d}s = 0,
\]
and by definition
\[
b_h(p - \pi_0 p, w_h) = ( p - \pi_0 p , \nabla \cdot w_h
)_h = 0.
\]
\end{proof}
\begin{lemma}\label{lem:stab_press}
Let $(u_h,p_h) \in \mathcal{V}$ then there holds
\[
\|h^{\frac12} n_F\,\cdot\, [\nabla u_h]\|_{\mathcal{F}_i} + \|h^{\frac12}
[p_h]\|_{\mathcal{F}_i} \lesssim \|h^{\frac12} n_F\,\cdot\,
[\nabla u_h - \mathcal{I} p_h]\|_{\mathcal{F}_i} + \|\nabla \cdot 
u_h\|_\Omega + \|h^{-\frac12} [u_h]\|_{\mathcal{F}_i}.
\]
\end{lemma}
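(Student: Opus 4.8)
The plan is to work face by face and exploit the orthogonal splitting of the jump of the normal stress
$[\,n_F\cdot\sigma(u_h,p_h)\,] = [\,n_F\cdot\nabla u_h\,] - [p_h]\,n_F$
into its component along $n_F$ and its components along a tangential frame $\{t_1,\dots,t_{d-1}\}$ of the face $F$. Projecting onto a tangent vector $t_k$ annihilates the pressure contribution, since $t_k\cdot n_F = 0$, giving $t_k\cdot[\,n_F\cdot\nabla u_h\,] = t_k\cdot[\,n_F\cdot\sigma(u_h,p_h)\,]$; hence the tangential components of the vector $n_F\cdot[\nabla u_h]$ are controlled directly by the full stress jump. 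Projecting onto $n_F$ gives the single scalar relation $n_F\cdot[\,n_F\cdot\nabla u_h\,] - [p_h] = n_F\cdot[\,n_F\cdot\sigma(u_h,p_h)\,]$, which couples the normal--normal derivative jump $n_F\cdot(n_F\cdot[\nabla u_h])$ with the pressure jump. To finish, one more relation is needed to separate these two quantities, and this is where the divergence enters.

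Next I would recover the normal--normal derivative from the divergence. Writing, in the local frame, $\nabla\cdot u_h = n_F\cdot(n_F\cdot\nabla u_h) + \sum_k t_k\cdot(t_k\cdot\nabla u_h)$ (a decomposition of the trace valid in any orthonormal basis), the jump of the normal--normal part reads $n_F\cdot[\,n_F\cdot\nabla u_h\,] = [\nabla\cdot u_h] - \sum_k t_k\cdot[\,t_k\cdot\nabla u_h\,]$. Each tangential derivative $t_k\cdot(t_k\cdot\nabla u_h)$ depends only on the trace of $u_h$ on $F$, so its jump is the surface derivative along $t_k$ of $[u_h]|_F$, which is affine on $F$; a discrete inverse inequality on the face then yields $\|t_k\cdot[\,t_k\cdot\nabla u_h\,]\|_F \lesssim h_F^{-1}\|[u_h]\|_F$. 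Combined with the fact that $\nabla\cdot u_h$ is piecewise constant, so that $\|h^{\frac12}[\nabla\cdot u_h]\|_{\mathcal{F}_i}\lesssim\|\nabla\cdot u_h\|_\Omega$ by passing from the face $L^2$-norm to the element $L^2$-norm using shape regularity and quasi-uniformity, this controls $\|h^{\frac12}\,n_F\cdot(n_F\cdot[\nabla u_h])\|_{\mathcal{F}_i}$ by the last two terms on the right-hand side.

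Putting the pieces together yields both bounds simultaneously. For the gradient jump, the tangential components are bounded by $\|h^{\frac12}[\,n_F\cdot\sigma(u_h,p_h)\,]\|_{\mathcal{F}_i}$ and the normal--normal component by $\|\nabla\cdot u_h\|_\Omega + \|h^{-\frac12}[u_h]\|_{\mathcal{F}_i}$, giving the estimate for $\|h^{\frac12}\,n_F\cdot[\nabla u_h]\|_{\mathcal{F}_i}$ after collecting the $h^{\frac12}$ weights. For the pressure, I solve the scalar normal relation as $[p_h] = n_F\cdot(n_F\cdot[\nabla u_h]) - n_F\cdot[\,n_F\cdot\sigma(u_h,p_h)\,]$ and insert the bound on the normal--normal derivative jump just obtained, which produces exactly the same three terms. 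The main obstacle is the middle step: recognizing that the jump of the tangential derivative of a tangential velocity component equals the tangential derivative of $[u_h]|_F$ and can therefore be absorbed into the face-jump term $\|h^{-\frac12}[u_h]\|_{\mathcal{F}_i}$. Everything else is the orthogonal decomposition of the stress jump, the elementary conversion of the piecewise-constant divergence jump from a face to a global element norm, and bookkeeping of the mesh weights; the nonconforming zero-mean matching of $[u_h]$ is not needed beyond affineness on $F$, though it sharpens the inverse-estimate constant.
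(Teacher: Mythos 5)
Your proof is correct, and while it rests on the same algebraic kernel as the paper's — the basis-independence of the trace, $\sum_{i,j} n_{F,i} n_{F,j}\partial_{x_j} u_i = \nabla\cdot u_h - \text{tr}\bigl((\mathcal{I}-n_F\otimes n_F)\nabla u_h\bigr)$, so that the only coupling between $[p_h]$ and $n_F\cdot[\nabla u_h]$ sits in the normal--normal component, which is in turn controlled by the divergence and by tangential derivatives of the trace jump — the packaging is genuinely different. The paper expands $\|h^{\frac12}n_F\cdot[\nabla u_h - \mathcal{I}p_h]\|_F^2$ into the two target squares plus a cross term $2\int_F h_F[p_h]\bigl([\nabla\cdot u_h]-[\text{tr}(T\nabla(u_h-I_\text{cf}u_h))]\bigr)$, which it absorbs by Young's inequality after bounding the tangential gradient jump through the conforming quasi-interpolant $I_\text{cf}$ and \eqref{disc_i1}. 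You instead split the stress-jump vector orthogonally into its $n_F$- and $t_k$-components, read off the tangential components of $n_F\cdot[\nabla u_h]$ for free, and solve the single scalar normal relation for $[p_h]$; the tangential trace derivatives are bounded by a face-level inverse inequality applied to the affine function $[u_h]|_F$ rather than via $I_\text{cf}$. Your route avoids both the absorption step and the quasi-interpolation machinery, and makes transparent exactly which component of the stress jump fails to control the pressure; the paper's route is shorter to write at the level of squared norms and reuses tools ($I_\text{cf}$, the elementwise trace inequality) already introduced for the surrounding lemmas. Both yield the stated constant structure, with the divergence term entering through $\|h^{\frac12}[\nabla\cdot u_h]\|_{\mathcal{F}_i}\lesssim\|\nabla\cdot u_h\|_\Omega$ in your version and through $\|\nabla\cdot u_h\|_{\Delta_F}$ in the paper's.
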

\begin{proof}
Let $u_{i}$, $i=1,\hdots,d$ denote the components of
$u_h$ and define the tangential projection of the gradient matrix on the face
$F$ by
$T \nabla u_h := (I- n_F \otimes n_F) \nabla u_h$ where $\otimes$ denotes outer product. Considering one face $F \in \mathcal{F}_i$ we have
\[
\|h^{\frac12}n_F\,\cdot\,
[\nabla u_h - \mathcal{I} p_h]\|_{F}^2 = \|h^{\frac12} n_F\,\cdot\,
[\nabla u_h] \|_{F}^2 +  \|h^{\frac12}[p_h]\|_{F}^2 - 2 \int_F h_F  n_F\,\cdot\,
[\nabla u_h] \cdot (n_F\,\cdot\, [\mathcal{I} p_h]) \mbox{d}s.
\]
The integrand of the last term of the right hand side may be written
\[
 n_F\,\cdot\,
[\nabla u_h] \cdot (n_F\,\cdot\, [\mathcal{I} p_h]) = [p_h] \sum_{i=1}^d
\sum_{j=1}^d n_{F,i} n_{F,j} [\partial_{x_j} u_{i}].
\]
By applying the following identity
\[
\sum_{i=1}^d
\sum_{j=1}^d n_{F,i} n_{F,j} \partial_{x_j} u_i=  \nabla \cdot u_h -\text{\rm{tr(}}T \nabla u_h),
\]
where $\text{tr}(T \nabla u_h)$ denotes the trace of $T \nabla u_h$,
we may write
\[
[p_h] \left(\sum_{i=1}^d
\sum_{j=1}^d n_{F,i} n_{F,j} [\partial_{x_j} u_i]\right) = [p_h]\left([\nabla \cdot u_h]-[\text{\rm{tr(}}T \nabla u_h)]\right).
\]
Observe that since the tangential component of the gradient of
the conforming approximation $I_\text{cf} u_h$ does not jump we have
\[
[\text{\rm{tr(}}T \nabla u_h)] = [\text{\rm{tr(}}T (\nabla u_h  - \nabla I_\text{cf} u_h)].
\]
Collecting these identities we obtain
\begin{multline*}
\int_F h_F  n_F\,\cdot\,
[\nabla u_h] \cdot n_F\,\cdot\, [\mathcal{I} p_h] \mbox{d}s = \int_F h_F
[p_h]\Bigl(
 [\nabla \cdot u_h]-[\text{\rm{tr(}}T \nabla (u_h - I_\text{cf} u_h))]\Bigr)\mbox{d}s \\
\leq \|h^{\frac12} [p_h]\|_{F} C_i (\|\nabla (u_h - I_\text{cf}
u_h)\|_{\Delta_F} +  \|\nabla \cdot u_h\|_{\Delta_F}),
\end{multline*}
where $\Delta_F$ denotes the union of the elements that have $F$ as
common face. Consequently
\[
2 \int_F h_F  n_F\,\cdot\,
[\nabla u_h] \cdot n_F\,\cdot\, [\mathcal{I} p_h] \mbox{d}s \leq \frac12
\|h^{\frac12} [p_h]\|_{F}^2 + C \|h^{-\frac12} [u_h]\|_{\mathcal{F}_{\Delta_F}}^2 + C
\|\nabla \cdot u_h\|^2_{\Delta_F}.
\]
Summing over $F \in \mathcal{F}_i$ we see that
\[
\|h^{\frac12} n_F\,\cdot\, [\nabla u_h]\|^2_{\mathcal{F}_i} + \frac12 \|h^{\frac12}
[p_h]\|^2_{\mathcal{F}_i} \lesssim \|h^{\frac12} n_F\,\cdot\,
[\nabla u_h - \mathcal{I} p_h]\|^2_{\mathcal{F}_i} +  \|\nabla \cdot 
u_h\|^2_\Omega + C \|h^{-\frac12} [u_h]\|^2_{\mathcal{F}_i}
\]
which proves the claim.
\end{proof}
\begin{lemma}\label{disc_poinc}(Discrete Poincar\'e inequality)
For all $(u_h,p_h) \in V_h \times Q_h^0$ there holds
\[
\|h u_h\|_{1,h} \lesssim \|h^{\frac12} n_F\,\cdot\, [\nabla u_h]\|_{\mathcal{F}_i} +
\|h^{-\frac12} [u_h]\|_{\mathcal{F}_i} + \|u_h\|_{\omega}
\]
and
\[
\| h p_h\|_\Omega \lesssim \|h^{\frac12} [p_h]\|_{\mathcal{F}_i}.
\]
\end{lemma}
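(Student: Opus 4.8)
The plan is to establish the pressure estimate $\|h p_h\|_\Omega\lesssim\|h^{\frac12}[p_h]\|_{\mathcal{F}_i}$ first, since it is the engine that will drive the velocity estimate, and then reduce the velocity estimate to it together with a finite-dimensional argument anchored on $\omega$. For the pressure estimate I would argue by duality. Since $p_h\in Q_h^0$ has zero mean, the surjectivity of the divergence (Ne\v{c}as/inf--sup) supplies $v\in[H^1_0(\Omega)]^d$ with $\nabla\cdot v=p_h$ and $\|v\|_{H^1(\Omega)}\lesssim\|p_h\|_\Omega$. Writing $\|hp_h\|_\Omega^2=(h^2p_h,\nabla\cdot v)_h$ and integrating by parts elementwise — using that $\nabla(h^2p_h)=0$ inside each simplex and that $v$ vanishes on $\partial\Omega$ — collapses the right-hand side onto the interior faces, producing $\sum_{F\in\mathcal{F}_i}\int_F[h^2p_h]\,(v\cdot n_F)\,\mathrm{d}s$. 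A Cauchy--Schwarz step weighted by $h$, the trace inequality \eqref{trace}, and quasi-uniformity (so that $[h^2p_h]\simeq h_F^2[p_h]$ and $\sum_F h_F^3\|v\|_F^2\lesssim h^2\|v\|_{H^1}^2$) then yield $\|hp_h\|_\Omega^2\lesssim\|h^{\frac12}[p_h]\|_{\mathcal{F}_i}\|hp_h\|_\Omega$, which is the claim. One may equally pair $h^2p_h$ with the solution of a Neumann--Poisson problem, at the cost of invoking $H^2$ elliptic regularity.

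For the velocity estimate I would first observe that the normal jump plus the velocity jump already control the full gradient jump: the tangential part of $[\nabla u_h]$ on a face $F$ is the surface gradient of $[u_h]\vert_F$, bounded by $h_F^{-1}\|[u_h]\|_F$ through an inverse inequality, so $\|h^{\frac12}[\nabla u_h]\|_{\mathcal{F}_i}\lesssim\|h^{\frac12}n_F\cdot[\nabla u_h]\|_{\mathcal{F}_i}+\|h^{-\frac12}[u_h]\|_{\mathcal{F}_i}$. Next I decompose $u_h=u_h^{\mathrm{aff}}+\tilde w_h$, where $u_h^{\mathrm{aff}}(x):=\overline{u_h}+\overline{\nabla u_h}\,(x-x_c)$ is the affine field carrying the mean value and the broken mean gradient of $u_h$, and $\tilde w_h$ has vanishing mean value and vanishing mean gradient. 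The fluctuation is handled by the pressure estimate applied componentwise to the mean-zero piecewise-constant matrix $\nabla\tilde w_h=\nabla u_h-\overline{\nabla u_h}$, giving $\|h\nabla\tilde w_h\|_\Omega\lesssim\|h^{\frac12}[\nabla u_h]\|_{\mathcal{F}_i}$ and, in unweighted form, $\|\nabla\tilde w_h\|_\Omega\lesssim\|h^{-\frac12}[\nabla u_h]\|_{\mathcal{F}_i}$; a broken Poincar\'e--Wirtinger inequality (via $I_\text{cf}$ and the estimate preceding \eqref{disc_i1}) then controls $\|\tilde w_h\|_\Omega$ by $\|\nabla\tilde w_h\|_h+\|h^{-\frac12}[u_h]\|_{\mathcal{F}_i}$, and multiplying by $h$ keeps all these contributions at the level of the right-hand side.

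The affine modes are controlled by the data on $\omega$. On the finite-dimensional space of vector affine functions the $L^2(\omega)$-norm is an equivalent norm, since $\omega$ has positive measure, so $|\overline{\nabla u_h}|+|\overline{u_h}|\lesssim\|u_h^{\mathrm{aff}}\|_\omega\leq\|u_h\|_\omega+\|\tilde w_h\|_\Omega$ with an $h$-independent constant. Here the weight on the left of the Lemma is decisive: because $\|h\|_\Omega\lesssim h_{\max}$, the affine part enters only as $h_{\max}\big(|\overline{\nabla u_h}|+|\overline{u_h}|\big)$, and this extra power of $h$ downgrades the otherwise too-strong bound $\|\tilde w_h\|_\Omega\lesssim\|h^{-\frac12}[\nabla u_h]\|_{\mathcal{F}_i}+\|h^{-\frac12}[u_h]\|_{\mathcal{F}_i}$ exactly to the admissible level $\|h^{\frac12}[\nabla u_h]\|_{\mathcal{F}_i}+\|h^{-\frac12}[u_h]\|_{\mathcal{F}_i}$. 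Assembling $\|hu_h\|_{1,h}^2=\|hu_h\|_h^2+\|h\nabla u_h\|_h^2$ from the affine and fluctuation pieces, and using the first step to replace $\|h^{\frac12}[\nabla u_h]\|_{\mathcal{F}_i}$ by the normal jump, closes the estimate.

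The main obstacle I anticipate is the correct bookkeeping of $h$-powers in the velocity estimate: the subspace of globally affine fields is only \emph{weakly} seen by the measurement term $\|u_h\|_\omega$, and it is precisely the mesh weight on the left-hand side, together with quasi-uniformity, that compensates for this weakness. Getting the pressure-type Poincar\'e--Wirtinger inequality to produce the sharp scaling $\|h\,(\cdot)\|_\Omega\lesssim\|h^{\frac12}[\cdot]\|_{\mathcal{F}_i}$, rather than a power of $h$ too few, is the delicate point on which everything hinges.
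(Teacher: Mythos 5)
Your argument is correct and lands on both inequalities with the right powers of $h$, but it travels a partly different road from the paper's. The overall architecture coincides: split off the globally affine (resp.\ constant) modes, anchor them on $\omega$ (resp.\ on the zero-mean constraint for the pressure), and control the fluctuation by the jumps. The building blocks differ, however. For the pressure you argue by duality against a field $v\in[H^1_0(\Omega)]^d$ with $\nabla\cdot v=p_h$ supplied by the Ne\v{c}as inf--sup condition, followed by elementwise integration by parts onto the faces; the paper instead compares $p_h$ with its conforming quasi-interpolant $I_\text{cf}p_h$, invokes the generalized Poincar\'e inequality of \cite[Lemma B.63]{EG04} on the conforming part together with the zero mean, and finishes with an inverse inequality and $\|p_h-I_\text{cf}p_h\|_\Omega\lesssim\|h^{\frac12}[p_h]\|_{\mathcal{F}_i}$. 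For the velocity, the paper applies the same $I_\text{cf}$-plus-Lemma-B.63 machinery to the piecewise-constant field $\nabla u_h$ (using \eqref{disc_i1} twice, with $\|u_h\|_\omega$ playing the role of the seminorm detecting constants), whereas you make the constant mode explicit as $\overline{\nabla u_h}$, control it by finite-dimensional norm equivalence of affine fields on $\omega$, and reuse your pressure estimate componentwise on the mean-free field $\nabla u_h-\overline{\nabla u_h}$; your remark that the weight $h$ on the left-hand side is precisely what absorbs the crude bound $\|\nabla\tilde w_h\|_h\lesssim\|h^{-\frac12}[\nabla u_h]\|_{\mathcal{F}_i}$ is the same bookkeeping the paper performs implicitly. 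Finally, to reduce the full gradient jump to its normal component you use a face inverse inequality on $\nabla_F[u_h]$, while the paper uses that the tangential gradient of $I_\text{cf}u_h$ does not jump plus an elementwise trace inequality; the two are interchangeable. Your route is more self-contained (no appeal to \cite[Lemma B.63]{EG04}, and the affine-mode argument is completely explicit) at the price of invoking the surjectivity of the divergence; the paper's is shorter because $I_\text{cf}$ and its estimates are already set up for the rest of the analysis. One small point to make explicit in a written version: with a locally varying mesh function the face term in your duality step is $[h^2p_h]$ rather than $h_F^2[p_h]$, so the quasi-uniformity assumption (or simply taking $h$ to be the global mesh size in all weighted norms) should be stated where you use it.
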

\begin{proof}
For the first inequality use the Poincar\'e inequality for nonconforming finite elements
and a triangle inequality
\[
\|h u_h\|_{1,h} \lesssim \|h (\nabla u_h - I_\text{cf} \nabla u_h)\|_h +
\|h  I_\text{cf} \nabla u_h\|_h.
\]
Then observe that for $I_\text{cf} \nabla u_h$ constant,
$\|u_h\|_{\omega}=0$ implies that $I_\text{cf} \nabla u_h = 0$ and
therefore \cite[Lemma B.63]{EG04}
\[
\|h  I_\text{cf} \nabla u_h\|_h\leq \| h \nabla ( I_\text{cf} \nabla u_h -
\nabla u_h)\|_h+ \|u_h\|_{\omega}.
\]
Using \eqref{disc_i1} componentwise twice we then have
\[
\|h u_h\|_{1,h} \lesssim \|h^{\frac12} [\nabla u_h]\|_{\mathcal{F}_i}+ \|u_h\|_{\omega}.
\]
Finally each component of $\nabla u_h$ is decomposed on the normal and
tangential component on each face $F$ and we observe that using an elementwise
trace inequality, 
\begin{multline*}
 \|h^{\frac12} (\mathcal{I} - n_F \otimes n_F) [\nabla
 u_h]\|_{\mathcal{F}_i} = \|h^{\frac12} (\mathcal{I} - n_F \otimes n_F) [\nabla
 (u_h -  I_\text{cf} u_h)]\|_{\mathcal{F}_i} \\
\lesssim \|\nabla
 (u_h - I_\text{cf} u_h)\|_h \lesssim \|h^{-\frac12} [u_h]\|_{\mathcal{F}_i}.
\end{multline*}
Similarly for the proof of the second inequality observe that since
(redefining $I_\text{cf}$ to act on a scalar variable, and once again
by \cite[Lemma B.63]{EG04})
$\|h I_\text{cf} p_h\|_\Omega \lesssim  \|h \nabla  I_\text{cf}
p_h \|_\Omega  + \int_\Omega h  I_\text{cf}
p_h  ~\mbox{d}x$ 
there holds
\[
\|h p_h\|_h \lesssim \|h (p_h - I_\text{cf} p_h)\|_h + \|h \nabla ( I_\text{cf}
p_h - p_h)\|_{h} +\int_\Omega h  (I_\text{cf}
p_h - p_h) ~\mbox{d}x.
\]
It then follows using an inverse inequality that
\[
\|h p_h\|_h \lesssim \|p_h - I_\text{cf} p_h\|_\Omega  \lesssim \|h^{\frac12}[p_h]\|_{\mathcal{F}_i}
\]
and the proof is complete.
\end{proof}
\section{Stability estimates}\label{theory}
We will now focus on the formulation \eqref{FEM:compact} with
$\gamma_p=\gamma_x=0$. An immediate consequence of this choice is that
any solution to the system must satisfy 
\begin{equation}\label{eq:mass_conserv}
\nabla \cdot u_h\vert_\kappa  = \nabla
\cdot z_h\vert_\kappa = 0, \; \forall \kappa \in \mathcal{T}_h.
\end{equation}
The issue of stability of the discrete formulation is crucial since we
have no coercivity or inf--sup stability of the continuous formulation \eqref{eq:min1}--\eqref{eq:min2} to rely on. 
Indeed here the regularization plays an important part, since it
defines a semi-norm on the discrete space. We introduce a
mesh-dependent norm for the primal variable $X_h:= (v_h,q_h) \in \mathcal{V}$
\begin{equation}\label{def:tnorm1}
\tn X_h \tn_{V,Q} := \|h^{\frac12} n_F\cdot [\nabla v_h]\|_{\mathcal{F}_i}+ \|h^{\frac12} [q_h]\|_{\mathcal{F}_i} + \gamma_M^{\frac12}\|v_h\|_{\omega} + \|h^{-\frac12} [v_h]\|_{\mathcal{F}_i},
\end{equation}

We will also use the following triple norm with control of both the
dual pressure variabel $x_h$ and the dual velocities $z_h$.
\[
\tn (U_h,Z_h) \tn := \tn U_h \tn_{V,Q} +  \|x_h\|_\Omega + \|\nabla z_h \|_h .
\]
Since Dirichlet boundary conditions are set weakly on $W_h$, $\tn
 (U_h,Z_h) \tn$ can be shown to be a norm on $V_h\times Q^0_h$ using
 Lemmata \ref{lem:stab_press}--\ref{disc_poinc}. We
now prove a fundamental stability result for the discretization
\eqref{FEM:compact}.
\begin{theorem}\label{Thm:infsup}
  Let $\gamma_u, \gamma_M>0$, $\gamma_p=\gamma_x=0$ in \eqref{FEM:compact}--\eqref{Sglobal}. There exists a positive constant $c_s$, that is independent of $h$,
but not of $\gamma_u$, $\gamma_M$ or the local mesh geometry, such
that for all $(U_h,Z_h) \in \mathcal{V} \times \mathcal{W}$ there
holds
\[
c_s \tn (U_h,Z_h) \tn \leq \sup_{(X_h,Y_h) \in \mathcal{V} \times
  \mathcal{W}} \frac{\mathcal{G}[(U_h,Z_h),(X_h,Y_h) ]}{\tn (X_h,Y_h) \tn}
\]
\end{theorem}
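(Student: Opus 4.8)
The plan is to exploit the linearity of $\mathcal{G}$ in its test argument and build up the full norm $\tn(U_h,Z_h)\tn$ one contribution at a time, by evaluating $\mathcal{G}[(U_h,Z_h),\cdot\,]$ against a small number of tailored test functions and then combining them with suitable weights. The starting point is the \emph{symmetric} choice $(X_h,Y_h)=(U_h,-Z_h)$. Because $A_h$ is linear in its second slot, the two copies of $A_h$ inside $\mathcal{A}_h$ cancel, and since $\gamma_p=\gamma_x=0$ the only surviving contributions are positive semidefinite: one computes $\mathcal{G}[(U_h,Z_h),(U_h,-Z_h)]=\gamma_u\|h^{-\frac12}[u_h]\|_{\mathcal{F}_i}^2+\gamma_M\|u_h\|_\omega^2$, while $\tn(U_h,-Z_h)\tn=\tn(U_h,Z_h)\tn$. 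This immediately controls the data term $\gamma_M^{\frac12}\|u_h\|_\omega$ and the velocity jump $\|h^{-\frac12}[u_h]\|_{\mathcal{F}_i}$ appearing in $\tn U_h\tn_{V,Q}$.

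Next I control the dual unknowns. For the dual velocity I test with $((z_h,0),(0,0))$, which is admissible since $W_h\subset V_h$; this produces $a_h(z_h,z_h)=\|\nabla z_h\|_h^2$ together with the cross terms $-b_h(x_h,z_h)$, $\gamma_u s_{j,-1}(u_h,z_h)$ and $\gamma_M(u_h,z_h)_\omega$, each of which is bounded by Cauchy--Schwarz and absorbed, after Young's inequality, into $\|\nabla z_h\|_h^2$ and the quantities already mastered. For the dual pressure I use surjectivity of the divergence: choose $v\in[H^1(\Omega)]^d$ with $\nabla\cdot v=x_h$ and $\|v\|_{H^1(\Omega)}\lesssim\|x_h\|_\Omega$, set $v_x:=r_h v\in V_h$, and test with $((v_x,0),(0,0))$. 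By the second identity of Lemma \ref{lem:rhH1proj} one has $-b_h(x_h,v_x)=-b_h(x_h,v)=\|x_h\|_\Omega^2$, and the estimates \eqref{eq:approx}, \eqref{trace} give $\tn(v_x,0)\tn_{V,Q}\lesssim\|v\|_{H^1(\Omega)}\lesssim\|x_h\|_\Omega$ (with a constant depending on $\gamma_M$, which is permitted). The remaining term $a_h(v_x,z_h)$ is again handled by Young's inequality against $\|\nabla z_h\|_h$.

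It remains to recover $\|h^{\frac12}n_F\cdot[\nabla u_h]\|_{\mathcal{F}_i}$ and $\|h^{\frac12}[p_h]\|_{\mathcal{F}_i}$. Testing with $((0,0),(0,\nabla\cdot u_h))$ yields $-b_h(\nabla\cdot u_h,u_h)=\|\nabla\cdot u_h\|_\Omega^2$ at norm cost $\|\nabla\cdot u_h\|_\Omega$. For the normal stress jump I integrate $a_h(u_h,w_h)+b_h(p_h,w_h)$ by parts: since $u_h$ is affine and $p_h$ piecewise constant the element residual vanishes and only face terms remain, and because the normal stress jump $n_F\cdot[\sigma(u_h,p_h)]$ is constant on each face while Crouzeix--Raviart jumps have vanishing face mean, the identity reduces to $a_h(u_h,w_h)+b_h(p_h,w_h)=\sum_{F\in\mathcal{F}_i}|F|\,(n_F\cdot[\sigma(u_h,p_h)])\cdot\overline{\{w_h\}}$, with $\sigma(u_h,p_h)=\nabla u_h-\mathcal{I}p_h$. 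Choosing $w_h^\ast\in W_h$ with prescribed face averages $\overline{\{w_h^\ast\}}|_F=h_F\,(n_F\cdot[\sigma(u_h,p_h)])$ on interior faces and zero on $\partial\Omega$ turns the right-hand side into $\|h^{\frac12}n_F\cdot[\sigma(u_h,p_h)]\|_{\mathcal{F}_i}^2$, while Lemma \ref{lem:invtrace} together with the inverse inequality in \eqref{trace} gives $\|\nabla w_h^\ast\|_h\lesssim\|h^{\frac12}n_F\cdot[\sigma(u_h,p_h)]\|_{\mathcal{F}_i}$. Feeding the three controlled quantities $\|h^{\frac12}n_F\cdot[\sigma(u_h,p_h)]\|_{\mathcal{F}_i}$, $\|\nabla\cdot u_h\|_\Omega$ and $\|h^{-\frac12}[u_h]\|_{\mathcal{F}_i}$ into Lemma \ref{lem:stab_press} then bounds the two missing norm components.

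Finally I assemble a single test function $(X_h,Y_h)=(U_h,-Z_h)+\sum_i\delta_i(\mathrm{test}_i)$, with the weights $\delta_i>0$ chosen small enough that all cross terms generated by the auxiliary tests are absorbed, in order, into the quantities already controlled; this yields $\mathcal{G}[(U_h,Z_h),(X_h,Y_h)]\gtrsim\tn(U_h,Z_h)\tn^2$ while $\tn(X_h,Y_h)\tn\lesssim\tn(U_h,Z_h)\tn$, and dividing gives the claim. The main obstacle is precisely this last bookkeeping: one must construct $w_h^\ast$ and $v_x$ so that each extracts its target quantity \emph{exactly} while contributing only lower-order cross terms, and then order the Young's-inequality absorptions so that no controlled quantity is re-spent. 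The construction of $w_h^\ast$ via Lemma \ref{lem:invtrace} and the divergence-surjectivity argument for $v_x$ are the two non-routine ingredients.
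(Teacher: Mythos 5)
Your overall architecture coincides with the paper's: the symmetric test $(U_h,-Z_h)$ for the stabilization and data terms, the test function with prescribed face averages $h_F\,[n_F\cdot\nabla u_h-p_hn_F]$ (the paper's $\xi_h$) controlled via Lemma \ref{lem:invtrace}, divergence surjectivity combined with the orthogonality of $r_h$ to recover $\|x_h\|_\Omega$, Lemma \ref{lem:stab_press} to split the normal stress jump into its velocity and pressure parts, and the concluding comparison $\tn(X_h,Y_h)\tn\lesssim\tn(U_h,Z_h)\tn$. However, there is one genuine gap in the step that recovers the dual variables. Testing with $X_h=(z_h,0)$ produces $A_h[(z_h,0),(z_h,x_h)]=\|\nabla z_h\|_h^2-b_h(x_h,z_h)$, and the cross term $-b_h(x_h,z_h)=(x_h,\nabla\cdot z_h)_h$ cannot be ``absorbed into the quantities already mastered'': at that stage $\|x_h\|_\Omega$ is not yet controlled, and it couples the two gains $\|\nabla z_h\|_h^2$ (weight $\delta_1$) and $\|x_h\|_\Omega^2$ (weight $\delta_2$, obtained only from the later $v_x$ test) with an $O(1)$ constant. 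Writing out the Young inequalities, absorption of both this term and the cross term $a_h(r_hv_x,z_h)$ requires weights satisfying $\delta_1\sqrt d+\delta_2c_x\lesssim\sqrt{\delta_1\delta_2}$, which has no solution for $c_x\gtrsim1$; no ordering of the absorptions rescues this. The paper avoids the problem structurally: it tests with $X_h=(z_h+\alpha r_hv_x,\,x_h)$, i.e.\ it puts $x_h$ into the \emph{pressure} slot of the test function, so that $A_h[(z_h+\alpha r_hv_x,x_h),(z_h,x_h)]=\|\nabla z_h\|_h^2+\alpha a_h(r_hv_x,z_h)+\alpha\|x_h\|_\Omega^2$ and the term $b_h(x_h,z_h)$ cancels identically by the skew-symmetric structure of $A_h$. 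Your argument needs exactly this modification; the remaining cross term $\alpha a_h(r_hv_x,z_h)$ is then harmless since it carries the small factor $\alpha$.

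Two smaller points. First, your extra test $Y_h=(0,\nabla\cdot u_h)$, which gains $\|\nabla\cdot u_h\|_\Omega^2$, is not in the paper (which instead invokes $\nabla\cdot u_h=0$, valid for solutions of \eqref{FEM:compact} by \eqref{eq:mass_conserv}); your variant is defensible for arbitrary $(U_h,Z_h)$, but then $\|\nabla\cdot u_h\|_\Omega$ is \emph{not} bounded by $\tn(U_h,Z_h)\tn$, so your final claim $\tn(X_h,Y_h)\tn\lesssim\tn(U_h,Z_h)\tn$ fails for that component; you must either add $\|\nabla\cdot u_h\|_\Omega$ to both sides of the comparison or drop this test and argue as the paper does. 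Second, the identity $-b_h(x_h,r_hv)=\|x_h\|_\Omega^2$ and the bound $\tn(r_hv,0)\tn_{V,Q}\lesssim\|x_h\|_\Omega$ are used exactly as in the paper and are fine.
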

\begin{proof}
First we observe that by testing with $X_h=U_h$ and $Y_h=-Z_h$ we have
\[
\gamma_u \|h^{-\frac12}[u_h]\|^2_{\mathcal{F}_i}+\gamma_M \|u_h\|^2_\omega = \mathcal{G}[(U_h,Z_h),(U_h,-Z_h) ].
\]
Then observe that by integrating by parts in the bilinear form
$a_h(\cdot,\cdot)$ and using the zero mean value property of the
approximation space we have
\[
a_h(u_h,w_h) + b_h(p_h,w_h) = \sum_{F \in \mathcal{F}_i} \int_F [n_F
\cdot \nabla u_h -  p_h n_F]
\cdot\{w_h\} ~\mbox{d}s.
\]
Define the function $\xi_h \in W_h$ such that for every face $F \in
\mathcal{F}_i$
$$
\overline{ \{\xi_h\}}\vert_{F} := h_F  [n_F \cdot \nabla u_h -  p_h n_F]\vert_F.
$$
This is possible in the
nonconforming finite element space since the degrees of freedom may be
identified with the average value of the finite element function on an
element face. 
Using Lemma \ref{lem:invtrace} we have
\begin{equation}\label{eq:L2_stab_func}
 \|\xi_h\|^2_{\Omega} \leq c_{\mathcal{T}}   \sum_{F \in \mathcal{F}_{i}} h^2_F  \|h_F^{\frac12}  [ n_F \cdot \nabla u_h -  p_h n_F]\|_F^2.
\end{equation}
Testing with $Y_h=(\xi_h,0)$ and $X_h = (0,0)$ we get
\begin{equation*}
\|h^{\frac12}[n_F \cdot\nabla u_h -  p_h n_F]\|_{\mathcal{F}_i}^2 
=\mathcal{G}[(U_h,Z_h),(0,Y_h) ].
\end{equation*}
By testing with $X_h = (z_h+\alpha r_h v_x,x_h)$, where $\alpha>0$ and
$v_x \in
[H^1(\Omega)]^d$ is a function such that $\nabla \cdot v_x = x_h$ and
$\|v_x\|_{H^1(\Omega)} \leq c_x \|x_h\|_{\Omega}$, we have
\begin{multline*}
\|\nabla z_h\|_h^2+ \alpha \|x_h\|^2+a_h(z_h,\alpha r_h v_x) \\+
\gamma_u s_{j,-1}(u_h,z_h+\alpha r_h v_x) + m(u_h,z_h+\alpha r_h
v_x)_\omega \\
= \mathcal{G}[(U_h,Z_h),(X_h,0) ].
\end{multline*}
Observe now that by the Cauchy-Schwarz inequality, the
arithmetic-geometric inequality and the stability of
$r_h v_x$ there holds
\[
a_h(z_h,\alpha r_h v_x) \leq \frac14 \|\nabla z_h\|_h^2 + c_x^2
\alpha^2 \|x_h\|_\Omega^2.
\]
Then by the trace
inequality and Poincar\'e's inequality
\begin{multline*}
\gamma_u s_{j,-1}(u_h,z_h+\alpha r_h v_x) + \gamma_M (u_h,z_h+\alpha r_h v_x)_\omega  \\
\lesssim (\gamma_u
\|h^{-\frac12} [u_h]\|_{\mathcal{F}_i} + \gamma_M \|u_h\|_\omega) (
\|\nabla z_h\|_h + \|v_x\|_{H^1(\Omega)})  \\
\leq C_\gamma (\gamma_u
\|h^{-\frac12} [u_h]\|_{\mathcal{F}_i}^2+  \gamma_M
\|u_h\|_\omega^2) + \frac{1}{4} (\|\nabla z_h\|^2_h + \alpha^2 \|x_h\|_{\Omega}^2).
\end{multline*}
The consequence of this is that for $\alpha,\beta>0$ sufficiently small there
exists $c$ such that
\begin{multline}\label{first_stab}
c \Bigl(\|h^{-\frac12} [u_h]\|_{\mathcal{F}_i}^2 +\|\nabla z_h\|_h^2+ \|u_h\|_{\omega}^2 \\+
  \| x_h\|^2_{\Omega}+
\|h^{\frac12}[n_F \cdot \nabla x_h - p_h n_F ]\|_{\mathcal{F}}^2 \Bigr)
\\
\leq  \mathcal{G}[(U_h,Z_h),(X_{UZ},Y_{UZ})],
\end{multline}
where $X_{UZ} = U_h+(\beta (z_h+ \alpha r_h v_x),x_h)$, $Y_{UZ} =-Z_h + (\xi_h,0)$.
Applying Lemma \ref{lem:stab_press}, recalling that $\|\nabla \cdot
u_h\|_h = 0$ we deduce that
\begin{equation}
C \tn (U_h,Z_h) \tn^2 \leq \mathcal{G}[(U_h,Z_h),(X_{UZ},Y_{UZ})].
\end{equation}
It remains to prove that $\tn(X_{UZ},Y_{UZ}) \tn \lesssim \tn(U_h,Z_h) \tn$.
This follows by observing that
\begin{multline*}
\tn(X_{UZ},Y_{UZ}) \tn \leq \tn (U_h,Z_h)\tn \\
+\beta ( \|h^{\frac12}
[\nabla(z_h+ \alpha r_h v_x)]\|_{\mathcal{F}_i}+ \|(z_h+ \alpha r_h v_x)\|_{\omega}
+ \|h^{-\frac12}[z_h+ \alpha r_h v_x]\|_{\mathcal{F}_i}) + \|\nabla \xi_h\|_h
\end{multline*}
and
\begin{multline*}
 \|h^{\frac12}
[\nabla(z_h+ \alpha r_h v_x)]\|_{\mathcal{F}_i}+ \|(z_h+ \alpha r_h v_x)\|_{\omega}
+ \|h^{-\frac12}[z_h+ \alpha r_h v_x]\|_{\mathcal{F}_i} \lesssim \| z_h\|_{1,h}+\| r_h v_x\|_{1,h} \\
\lesssim \|\nabla z_h\|_h + \|x_h\|_\Omega.
\end{multline*}
Finally we use an inverse inequality and the bound
\eqref{eq:L2_stab_func} to obtain the bound
\[
 \|\nabla \xi_h\|_h \lesssim  \|h_F^{\frac12}  [ n_F \cdot \nabla u_h
 -  p_h n_F]\|_{\mathcal{F}_i} \lesssim \tn (U_h,0)\tn
\]
which finishes the proof.
\end{proof}
\begin{corollary}
The formulation \eqref{FEM:compact} admits a unique solution $(u_h,p_h)
\in \mathcal{V}$ and $(z_h,x_h) \in \mathcal{W}$.
\end{corollary}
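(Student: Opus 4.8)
The plan is to read off the corollary directly from the inf--sup stability of Theorem \ref{Thm:infsup}. First I would note that \eqref{FEM:compact} is a square linear system: both the unknown pair $(U_h,Z_h)$ and the test pair $(X_h,Y_h)$ range over the same finite-dimensional space $\mathcal{V}_h \times \mathcal{W}_h$, the left-hand side is precisely the bilinear form $\mathcal{G}[(U_h,Z_h),(X_h,Y_h)]$, and the right-hand side $l(w_h)+m(\tilde u_M,v_h)$ is a bounded linear functional on the test space. For a square system, existence of a solution for arbitrary data is equivalent to uniqueness, so it suffices to show that the homogeneous problem admits only the trivial solution.

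To establish this I would apply Theorem \ref{Thm:infsup} to a solution $(U_h,Z_h)$ of \eqref{FEM:compact} with vanishing right-hand side. Then $\mathcal{G}[(U_h,Z_h),(X_h,Y_h)] = 0$ for every $(X_h,Y_h) \in \mathcal{V}_h \times \mathcal{W}_h$, so the supremum in the inf--sup inequality vanishes; since $c_s>0$ is independent of $h$, this forces $\tn (U_h,Z_h) \tn = 0$.

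It then remains to invoke the fact, recorded just before Theorem \ref{Thm:infsup}, that $\tn \cdot \tn$ is a genuine norm on $\mathcal{V}_h \times \mathcal{W}_h$. Concretely, the control of $\|h^{-\frac12}[u_h]\|_{\mathcal{F}_i}$, $\|h^{\frac12} n_F\cdot[\nabla u_h]\|_{\mathcal{F}_i}$ and $\|u_h\|_\omega$, together with the first estimate of Lemma \ref{disc_poinc}, gives $u_h = 0$; the control of $\|h^{\frac12}[p_h]\|_{\mathcal{F}_i}$ with the second estimate of Lemma \ref{disc_poinc} gives $p_h = 0$; and the terms $\|x_h\|_\Omega$ and $\|\nabla z_h\|_h$ force $x_h = 0$ and $z_h = 0$. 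Hence $(U_h,Z_h)=0$, which yields uniqueness and, through the square-system argument, existence.

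The main obstacle here is essentially bookkeeping rather than genuine analysis, since the substantive work has already been carried out in Theorem \ref{Thm:infsup} and Lemmata \ref{lem:stab_press}--\ref{disc_poinc}. The one point deserving care is that $\|\nabla z_h\|_h$ be definite on $W_h$: a priori it is only a seminorm, vanishing on constants, but the weak enforcement of the homogeneous Dirichlet condition through the face integrals in the definition of $X_h^0$ removes the constants from $W_h$, so that $\|\nabla z_h\|_h = 0$ forces $z_h = 0$ by a discrete Poincar\'e inequality.
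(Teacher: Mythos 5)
Your proposal is correct and follows essentially the same route as the paper's own proof: the system is square so uniqueness implies existence, Theorem \ref{Thm:infsup} applied to the homogeneous problem forces $\tn (U_h,Z_h)\tn = 0$, and Lemma \ref{disc_poinc} (with the remarks preceding Theorem \ref{Thm:infsup}) shows the triple norm is definite, giving $u_h=p_h=z_h=x_h=0$. Your extra care about the definiteness of $\|\nabla z_h\|_h$ on $W_h$ is a sensible clarification but not a departure from the paper's argument.
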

\begin{proof}
The system matrix corresponding to \eqref{FEM:compact} is a square matrix
and we only need to show that there are no zero eigenvalues.
Assume that $l(w_h) = 0$. It then follows by Theorem \ref{Thm:infsup}
that for any solution $(u_h,p_h)$ there holds
\[
c_s \tn (U_h,Z_h) \tn \leq \sup_{(X_h,Y_h) \in \mathcal{V} \times
  \mathcal{W}} \frac{\mathcal{G}[(U_h,Z_h),(X_h,Y_h) ]}{\tn (X_h,Y_h)
  \tn} =0.
\]
Recalling Lemma \ref{disc_poinc} this
implies that $u_h=p_h=z_h=x_h=0$ showing that the solution is unique.
\end{proof}
\begin{remark}
Observe that the proof of Theorem \ref{Thm:infsup} works also for $\gamma_p
> 0$ and $\gamma_x > 0$, the only modification in this case is
that the contribution $\|\nabla \cdot u_h\|_h$ must be added to the
norm $\tn(u_h,p_h)\tn_V$ and stability must be proven by testing
with $y_h = \nabla \cdot u_h$.
\end{remark}
\section{Error estimates using conditional stability}\label{error_est}
In this section we will use the stability proven in the previous
section to derive error estimates. 
\begin{proposition}\label{prop:stab_conv}
Let $(u,p) \in [H^2(\Omega)]^d \times H^1(\Omega)$ be the solution of
\eqref{stokes} and $(u_h,p_h) \times Z_h$ the solution to \eqref{FEM:compact}--\eqref{Sglobal},
with $\gamma_u,\gamma_M>0$ and $\gamma_p = \gamma_x = 0$. Then there holds
\[
\tn ((r_h u - u_h, \pi_0 p - p_h),Z_h)\tn \lesssim  h (\|u\|_{H^2(\Omega)} +
\|p\|_{H^1(\Omega)})+ \gamma_M^{\frac12} \|\delta u\|_{\omega}
\]
\end{proposition}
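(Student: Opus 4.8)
The plan is to apply the inf--sup stability of Theorem~\ref{Thm:infsup} to the discrete error pair $(\zeta_h,Z_h)$, where $\zeta_h:=(r_hu-u_h,\pi_0p-p_h)\in\mathcal{V}$ is the error measured through the interpolants and $Z_h$ is the discrete dual solution, whose continuous counterpart vanishes. First I would check that $(\zeta_h,Z_h)$ is an admissible argument: since the exact velocity is divergence free and $\nabla\cdot u_h\vert_\kappa=0$ by \eqref{eq:mass_conserv}, Lemma~\ref{lem:rhH1proj} applied with $q_h=\nabla\cdot r_hu$ gives $\nabla\cdot r_hu\vert_\kappa=0$, hence $\nabla\cdot\zeta_h\vert_\kappa=0$, which is exactly the situation under which Theorem~\ref{Thm:infsup} was established. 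The theorem then yields
\[
c_s\tn(\zeta_h,Z_h)\tn\leq\sup_{(X_h,Y_h)}\frac{\mathcal{G}[(\zeta_h,Z_h),(X_h,Y_h)]}{\tn(X_h,Y_h)\tn},
\]
so it remains to bound the numerator by $\bigl(h(\|u\|_{H^2(\Omega)}+\|p\|_{H^1(\Omega)})+\gamma_M^{\frac12}\|\delta u\|_\omega\bigr)\tn(X_h,Y_h)\tn$ uniformly in $(X_h,Y_h)$.

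The core step is to convert this numerator into a consistency defect plus interpolation and data terms. Writing $X_h=(v_h,q_h)$, $Y_h=(w_h,y_h)$ and expanding $\mathcal{G}$ via \eqref{Aglobal}--\eqref{Sglobal}, I would split $\zeta_h=(r_hu-u,\pi_0p-p)+((u,p)-(u_h,p_h))$ inside $\mathcal{A}_h[(\zeta_h,Z_h),\cdot]$ and use Lemma~\ref{lem:rhH1proj} to kill $A_h[(r_hu-u,\pi_0p-p),(w_h,y_h)]$, so that $\mathcal{A}_h[(\zeta_h,Z_h),\cdot]=\mathcal{A}[((u,p)-(u_h,p_h),Z_h),\cdot]$. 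The dual cross term $A_h[(v_h,q_h),(z_h,x_h)]$ then matches the one in the error equation of Lemma~\ref{weak_cons_full}, and since $\gamma_x=0$ gives $S_a\equiv0$ by \eqref{eq:stab_a}, no spurious dual stabilization survives. Collecting the stabilization and measurement contributions through $S_p[\zeta_h,X_h]+S_p[(u_h,p_h),X_h]=S_p[(r_hu,\pi_0p),X_h]=\gamma_u s_{j,-1}(r_hu,v_h)$ and $m(r_hu-u_h,v_h)-m(\tilde u_M-u_h,v_h)=\gamma_M(r_hu-u-\delta u,v_h)_\omega$, I expect the identity
\[
\mathcal{G}[(\zeta_h,Z_h),(X_h,Y_h)]=L+\gamma_u s_{j,-1}(r_hu,v_h)+\gamma_M(r_hu-u-\delta u,v_h)_\omega,
\]
where $|L|$ is bounded by the right-hand side of Lemma~\ref{weak_cons_full}.

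It then remains to estimate the three pieces. For $L$ I would take the infimizing pair $(\nu_h,\eta_h)=(r_hu,\pi_0p)$, apply Cauchy--Schwarz face by face with weights $h^{\pm\frac12}$, bound the defect $\|h^{\frac12}n_F\cdot(\sigma(u,p)-\{\sigma(r_hu,\pi_0p)\})\|_{\mathcal{F}}\lesssim h(\|u\|_{H^2(\Omega)}+\|p\|_{H^1(\Omega)})$ using \eqref{eq:approx} together with the standard trace estimate for the $L^2$-projection $\pi_0$, and control the test factor by $\|h^{-\frac12}[w_h]\|_{\mathcal{F}}\lesssim\|\nabla w_h\|_h$. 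The jump term $\gamma_u s_{j,-1}(r_hu,v_h)$ is handled by writing $[r_hu]=[r_hu-u]$ and invoking \eqref{eq:approx}, leaving the factor $\|h^{-\frac12}[v_h]\|_{\mathcal{F}_i}$; the measurement term splits into $\gamma_M^{\frac12}\|r_hu-u\|_\omega$ (of order $h^2$ by \eqref{eq:approx}) and $\gamma_M^{\frac12}\|\delta u\|_\omega$, each against $\gamma_M^{\frac12}\|v_h\|_\omega$. All test-function factors are dominated by $\tn(X_h,Y_h)\tn$ in view of \eqref{def:tnorm1}, so dividing and taking the supremum gives the claim.

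The step I expect to be the main obstacle is the bookkeeping in the error identity: one must verify that $Z_h$ enters $\mathcal{G}[(\zeta_h,Z_h),\cdot]$ and the error equation of Lemma~\ref{weak_cons_full} in precisely the same way so that the $A_h[(v_h,q_h),(z_h,x_h)]$ cross terms cancel rather than duplicate, which relies essentially on $\gamma_x=0$ (so $S_a\equiv0$) and on the orthogonalities of Lemma~\ref{lem:rhH1proj}. The one analytic estimate worth isolating is $\|h^{-\frac12}[w_h]\|_{\mathcal{F}}\lesssim\|\nabla w_h\|_h$ for $w_h\in W_h$: it holds because the Crouzeix--Raviart jump has vanishing mean on each face (midpoint continuity), whence $\|[w_h]\|_F\lesssim h_F^{\frac12}\|\nabla w_h\|_{\Delta_F}$ by a scaling argument, and it is exactly what lets the consistency defect be measured against the only norm $\|\nabla w_h\|_h$ of the dual test velocity available in $\tn(X_h,Y_h)\tn$.
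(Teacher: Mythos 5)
Your proposal is correct and follows essentially the same route as the paper: apply the inf--sup stability of Theorem~\ref{Thm:infsup} to the discrete error $(r_hu-u_h,\pi_0p-p_h,Z_h)$, reduce the numerator via Lemma~\ref{weak_cons_full} and the orthogonalities of Lemma~\ref{lem:rhH1proj} to the nonconformity defect plus $\gamma_u s_{j,-1}(r_hu,v_h)$ and $\gamma_M(r_hu-u-\delta u,v_h)_\omega$, and then estimate each piece with \eqref{eq:approx} and Cauchy--Schwarz against $\tn(X_h,Y_h)\tn$. Your explicit verification that $\nabla\cdot(r_hu-u_h)\vert_\kappa=0$ (needed because the stability proof invokes Lemma~\ref{lem:stab_press} with vanishing divergence) is a point the paper leaves implicit, but otherwise the two arguments coincide.
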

\begin{proof}
First denote the discrete
error $\Theta_h =(r_h u - u_h,\pi_0 p - p_h)$. Then by Theorem \ref{Thm:infsup}
\[
c_s \tn(\Theta_h,Z_h)\tn^2 \leq  
\sup_{(X_h,Y_h) \in \mathcal{V}\times \mathcal{W}} \frac{\mathcal{G}[(\Theta_h,Z_h),(X_h,Y_h)]}{\tn (X_h,Y_h) \tn}.
\] 
Then applying Lemma \ref{weak_cons_full} and \ref{lem:rhH1proj} we have
\begin{multline*}
\mathcal{G}[(\Theta_h,Z_h),(X_h,Y_h) ] \leq \inf_{(\nu_h,\eta_h) \in V_h \times W_h}\sum_{F \in \mathcal{F}}
\int_{F}
|(\sigma(u,p) - \{\sigma(\nu_h,\eta_h)\}) \cdot n_F || [w_h]|
~\mbox{d}s \\
-b_h(y_h,r_h u - u)  +s_{j,-1}(r_h u,v_h)
+ \gamma_M(r_h u - u - \delta u, v_h)_{\omega}.
\end{multline*}
First note that
\begin{multline*}
 \inf_{(\nu_h,\eta_h) \in \mathcal{V}}\sum_{F \in \mathcal{F}}
\int_{F}
|(\sigma(u,p) - \{\sigma(\nu_h,\eta_h)\}) \cdot n_F || [w_h]|
~\mbox{d}s  \\
\leq h^{\frac12} (\inf_{(\nu_h,\eta_h) \in \mathcal{V}}  \sum_{F \in \mathcal{F}} \|\sigma(u,p) -
  \{\sigma(\nu_h,\eta_h)\}\|_{F}^2)^{\frac12} \|\nabla w_h\|_h \\
\lesssim h (\|u\|_{H^2(\Omega)} + \|p\|_{H^1(\Omega)}) \tn (0, Y_h)\tn,
\end{multline*}
\[
b_h(y_h,r_h u - u) = 0,
\]
\[
s_{j,-1}(r_h u,v_h) \leq  C h \|u\|_{H^2(\Omega)} \|h^{-\frac12} [v_h]\|_{\mathcal{F}_i} \leq C h
\|u\|_{H^2(\Omega)} \tn(X_h,0)\tn.
\]
Finally, using a Cauchy-Schwarz inequality and a Poincar\'e
inequality for $\eta_h$
\[
\gamma_M(r_h u - u, v_h)_{\omega} \lesssim \gamma_M\|r_h u
- u\|_{\omega} \|v_h\|_{\omega}\lesssim
h^2 \|u\|_{H^2(\Omega)} \tn(X_h,0)\tn.
\]
For the perturbation we have
\[
\gamma_M (\delta u, w_h)_{\omega}  \leq \gamma_M \|\delta u\|_{\omega} \|w_h\|_{\omega}.
\]
Collecting the above estimates ends the proof.
\end{proof}
\begin{theorem}\label{thm:asymptotic}
Assume that $u\in [H^2(\Omega)]^d$ and $p \in H^1(\Omega)$. Let
$\tilde u_h$ be defined by \eqref{confcons_ip} then
\[
\|\tilde u_h\|_{H^1(\Omega)} + \|p_h\|_\Omega \lesssim
\|u\|_{H^2(\Omega)}+\|p\|_{H^1(\Omega)} + \gamma_M^{\frac12} h^{-1} \|\delta u\|_\omega
\]
and, if $\delta u = 0$
\[
\tilde u_h \rightharpoonup u \mbox{ in } [H^1(\Omega)]^d \mbox{ and }
p_h \rightharpoonup p \mbox{ in } L^2\Omega).
\]
\end{theorem}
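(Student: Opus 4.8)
The plan is to read the two claims as, first, a uniform-in-$h$ a priori bound obtained by feeding Proposition \ref{prop:stab_conv} through the discrete Poincar\'e inequality of Lemma \ref{disc_poinc}, and second, a weak-compactness argument in which the limit is pinned down by the uniqueness of the continuous problem rather than by any rate. For the bound I would write $\|\tilde u_h\|_{H^1(\Omega)} \le \|\tilde u_h - u_h\|_{1,h} + \|u_h - r_h u\|_{1,h} + \|r_h u\|_{1,h}$. The last term is $\lesssim \|u\|_{H^2(\Omega)}$ by \eqref{eq:approx} and a triangle inequality; the first term is controlled by \eqref{cons_interp}, which---since $\gamma_p=\gamma_x=0$ forces $\nabla\cdot u_h=0$ via \eqref{eq:mass_conserv}---reduces to $\|h^{-1/2}[u_h]\|_{\mathcal{F}_i}$, and splitting $[u_h]=[u_h-r_hu]+[r_hu]$ this is bounded by Proposition \ref{prop:stab_conv} together with \eqref{eq:approx} (using $[u]=0$ for the conforming exact solution). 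The decisive term is the middle one: applying the first inequality of Lemma \ref{disc_poinc} to $u_h-r_hu$ turns the triple-norm control of Proposition \ref{prop:stab_conv} into control of $\|u_h-r_hu\|_{1,h}$ at the price of one inverse power of $h$, which reproduces exactly the right-hand side $\|u\|_{H^2(\Omega)}+\|p\|_{H^1(\Omega)}+\gamma_M^{1/2}h^{-1}\|\delta u\|_\omega$. The pressure is analogous: $\|p_h\|_\Omega \le \|p_h-\pi_0 p\|_\Omega + \|\pi_0 p\|_\Omega$ with $\|\pi_0 p\|_\Omega \le \|p\|_\Omega$, and (normalising $p$ to zero mean so that $p_h-\pi_0 p \in Q_h^0$) the second inequality of Lemma \ref{disc_poinc} again converts the jump control from the triple norm into the full $L^2$ norm, losing one power of $h$.

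For the convergence I set $\delta u=0$, so the bounds above are $O(h)$ times the data and in particular $\{\tilde u_h\}$ is bounded in $[H^1(\Omega)]^d$ and $\{p_h\}$ in $L^2(\Omega)$; extract a subsequence with $\tilde u_h \rightharpoonup u^*$ and $p_h \rightharpoonup p^*$. To identify the limit, note that Proposition \ref{prop:stab_conv} now gives $\|h^{-1/2}[u_h]\|_{\mathcal{F}_i}\to 0$, hence $\|\tilde u_h-u_h\|_{1,h}\to 0$ by \eqref{cons_interp}, so the element-wise gradient $\nabla u_h \rightharpoonup \nabla u^*$ weakly in $L^2(\Omega)$; likewise $\gamma_M^{1/2}\|u_h-r_hu\|_\omega\to 0$ with $r_h u \to u$ gives $u^*=u$ on $\omega$. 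I would then pass to the limit in the forward equation \eqref{eq:FEMmin1} tested with $w_h=r_h w$, $y_h=0$ for fixed $w\in[C^\infty_c(\Omega)]^d$: weak-strong pairing of $\nabla u_h\rightharpoonup\nabla u^*$ against $\nabla r_h w\to\nabla w$, of $p_h\rightharpoonup p^*$ against $\nabla\cdot r_h w\to\nabla\cdot w$, together with $r_h w\to w$ in $L^2$, yield $a(u^*,w)+b(p^*,w)=l(w)$ for all $w\in W$ by density. The element-wise constraint $\nabla\cdot u_h=0$ passes to $\nabla\cdot u^*=0$ in the weak limit and $p^*\in L^2_0(\Omega)$ since each $p_h$ has zero mean, so $(u^*,p^*)$ solves the weak Stokes problem with $u^*=u_M$ on $\omega$; the uniqueness argument of Section \ref{sec:stokes}, based on Theorem \ref{thm:3sphere}, then forces $(u^*,p^*)=(u,p)$. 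As every subsequence has a further subsequence with this same limit, the whole sequence converges.

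The main obstacle I anticipate is the inverse power $h^{-1}$: Proposition \ref{prop:stab_conv} only controls the weak triple norm, and recovering the full $H^1$ and $L^2$ norms forces the use of the discrete Poincar\'e inequality, which is not uniform in $h$. This is exactly why one cannot expect strong convergence here and must be content with weak convergence, whose limit is identified through the conditional stability and uniqueness of the ill-posed problem rather than through an error rate. A secondary technical nuisance is the nonconforming consistency when passing to the limit in $a_h$; this is sidestepped by transporting the weak $H^1$ convergence through the conforming approximant $\tilde u_h$ once the jumps of $u_h$ are known to vanish.
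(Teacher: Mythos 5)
Your proof is correct and follows essentially the same route as the paper: the same triangle-inequality decomposition $\|\tilde u_h\|_{H^1}\le\|\tilde u_h-u_h\|_{1,h}+\|u_h-r_hu\|_{1,h}+\|r_hu\|_{1,h}$ with the discrete Poincar\'e inequality of Lemma \ref{disc_poinc} converting the triple-norm control of Proposition \ref{prop:stab_conv} into the $H^1$ and $L^2$ bounds at the cost of $h^{-1}$, and the same weak-compactness argument passing to the limit in the forward equation tested with $r_hw$. The only cosmetic difference is that the paper uses the exact orthogonality of $r_h$ (Lemma \ref{lem:rhH1proj}) to make the consistency terms vanish identically rather than your weak-strong pairing, and you spell out the identification of the limit via unique continuation more explicitly than the paper does.
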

\begin{proof}
For the pressure we immediately observe that
\[
\|p_h \|_\Omega \lesssim \|p_h - \pi_0 p\|_\Omega +
\|p\|_\Omega 
\lesssim h^{-1} \tn  (0,p_h - \pi_0 p)\tn_V +
\|p\|_\Omega
\]
Then observe that by a Poincar\'e inequality and the $H^1$-stability
of the interpolation operator $r_h$ there holds
\begin{multline*}
\|\tilde u_h\|_{H^1(\Omega)} \leq \|\tilde u_h - u_h\|_{1,h} + \|
u_h\|_{1,h}\\
\leq \|\tilde u_h - u_h\|_{1,h} + \|
u_h - r_h u\|_{1,h}+ \|r_h
u\|_{1,h} \\
\lesssim \|h^{-\frac12}[u_h - r_h u]\|_{\mathcal{F}_i}+ \|u_h - r_h u\|_{\omega} + \|\nabla
(u_h - r_h u)\|_h + \|u\|_{H^1(\Omega)}.
\end{multline*}
Therefore
\[
\|\tilde u_h\|_{H^1(\Omega)} \lesssim h^{-1} \tn (u_h - r_h u,0)\tn_V +
\|u\|_{H^1(\Omega)}
\]
and the first claim follows by applying Proposition
\ref{prop:stab_conv}.

It follows that for $\delta u=0$ we may extract a subsequence of pairs $(\tilde
u_h,p_h)$ that converges weakly in $[H^1(\Omega)]^d \times L^2(\Omega)$.
By construction the divergence of the $H^1$-conforming part satisfies
\[
\|\nabla \cdot \tilde u_h\|_\Omega \lesssim \|h^{-\frac12}[u_h - r_h
u]\|_{\mathcal{F}_i}+ \underbrace{\|\nabla \cdot u_h\|_h}_{=0}+ h
\|u\|_{H^2(\Omega)}\lesssim h
\|u\|_{H^2(\Omega)}
\]
and hence $\|\nabla \cdot \tilde u\|_h \rightarrow 0$ for $h
\rightarrow 0$.
 It remains to show
that the weak limit is a weak solution of Stokes equation. To this end
consider, with $w \in C^1_0(\Omega)$,
\begin{multline*}
|a(\tilde u_h,w) + b(p_h, w) - l(w)|
\\ =
|a_h(\tilde u_h - u_h,w)+ a_h(u_h,w - r_h w)+ b(p_h, w - r_h w) - l(w - r_h w)| \\
= |a_h(\tilde u_h - u_h,w) - l(w - r_h w)| \lesssim
(\|h^{-\frac12}[u_h]\|_{\mathcal{F}_i} + h \|f\|_\Omega )
\|w\|_{H^1(\Omega)}\\
\lesssim h \|w\|_{H^1(\Omega)}
\end{multline*}
We conclude by taking the limit $h \rightarrow 0$.

\end{proof}
\begin{theorem}\label{thm:error_est}
Assume that $(u,p)\in [H^2(\Omega)]^d\times H^1(\Omega)$ is the unique
solution of \eqref{stokes} with $u=u_M$ in $\omega$ and the parameters $R_1, R_2$ and $R_3$ satisfy the assumptions of
Theorem \ref{thm:3sphere}. If $u_h$ is the solution of
\eqref{FEM:compact}-\eqref{Sglobal}, with $\gamma_u, \gamma_M >0$, $\gamma_p=\gamma_x=0$ and $\|\delta
u\|_\Omega \leq h_0$, $h_0>0$,
then, for $h>h_0$, there holds
\[
\|u - u_h\|_{B_{R_2}(x_0)} \lesssim  h^{\tau}
\]
where $\tau$ is the power from Theorem \ref{thm:3sphere} and the
hidden constant depends on $R_2/R_3$, the local mesh geometry and
$\|u\|_{H^2(\Omega)}$ and $\|p\|_{H^1(\Omega)}$.
\end{theorem}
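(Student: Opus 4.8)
The plan is to reduce the discrete interior error to the continuous conditional stability of Theorem~\ref{thm:3sphere}, which applies only to genuine homogeneous Stokes solutions, by replacing $u_h$ with a nearby \emph{exact} solution. First I would pass from the nonconforming iterate $u_h$ to its conforming representative $\tilde u_h$ of \eqref{confcons_ip} and set $e:=u-\tilde u_h\in[H^1(\Omega)]^d$. Estimate \eqref{cons_interp} together with Proposition~\ref{prop:stab_conv} (and $\nabla\cdot u_h=0$) gives $\|\tilde u_h-u_h\|_{1,h}\lesssim\|h^{-\frac12}[u_h]\|_{\mathcal F_i}\lesssim h$, so it suffices to estimate $\|e\|_{B_{R_2}(x_0)}$ and add this $O(h)$ term at the very end. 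Since $\tau<1$ and $h$ may be taken bounded, every genuinely $O(h)$ contribution is dominated by the target $h^\tau$.

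Second, I would assemble the three quantities that feed the three--ball inequality. The \emph{data-region smallness} $\|e\|_{B_{R_1}(x_0)}\le\|e\|_\omega\lesssim h$ follows, assuming $B_{R_1}(x_0)\subset\omega$, from the bound $\gamma_M^{\frac12}\|r_hu-u_h\|_\omega\lesssim h+\gamma_M^{\frac12}\|\delta u\|_\omega\lesssim h$ contained in Proposition~\ref{prop:stab_conv} (using $\|\delta u\|_\Omega\le h_0<h$), the approximation bound \eqref{eq:approx}, and the conforming defect above. The \emph{global bound} $\|e\|_{B_{R_3}(x_0)}\lesssim 1$, with hidden constant depending on $\|u\|_{H^2(\Omega)}$ and $\|p\|_{H^1(\Omega)}$, comes from Theorem~\ref{thm:asymptotic}, whose right-hand side stays bounded because $\gamma_M^{\frac12}h^{-1}\|\delta u\|_\omega\lesssim\gamma_M^{\frac12}$.

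Third --- and this is the crux --- I would show that $e$ is an $O(h)$ perturbation of a homogeneous Stokes solution. Working in $B_{R_0}(x_0)$ where $\mathfrak f=\mathfrak g=0$, as required by Theorem~\ref{thm:3sphere}, I would test the continuous residual $R(w):=a(e,w)+b(p-p_h,w)$ against $w\in[H^1_0(B_{R_0}(x_0))]^d$: the exact part is killed because $(u,p)$ solves \eqref{stokes}; inserting $r_hw$ makes the element and face contributions of $u_h$ vanish, since $u_h$ is piecewise affine, $p_h$ piecewise constant, and $\int_F[r_hw]=0$; and weak consistency (Lemma~\ref{lem:weakcons}) together with \eqref{eq:approx} and the conforming-defect estimate bounds the remainder by $Ch(\|u\|_{H^2(\Omega)}+\|p\|_{H^1(\Omega)})\|w\|_{H^1}$. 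Hence $\|R\|_{H^{-1}(B_{R_0})}\lesssim h$, and likewise $\|\nabla\cdot e\|_{B_{R_0}}=\|\nabla\cdot\tilde u_h\|_{B_{R_0}}\lesssim h$. I would then lift this data by solving the well-posed Stokes problem on $B_{R_0}(x_0)$ with right-hand sides $R$ and $\nabla\cdot e$; by a continuous dependence estimate of the type \eqref{Cont_dep_well_posed} the lift $\phi$ satisfies $\|\phi\|_{H^1(B_{R_0})}\lesssim h$, and $e-\phi$ is a weak, hence by interior regularity \eqref{eq:elliptic_reg} smooth, solution of the homogeneous Stokes system in $B_{R_0}(x_0)$.

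Finally I would apply Theorem~\ref{thm:3sphere} to $e-\phi$ and combine the estimates of the second step (with $\|\phi\|_{H^1(B_{R_0})}\lesssim h$ absorbed into both factors),
\[
\|e-\phi\|_{B_{R_2}}\lesssim\|e-\phi\|_{B_{R_1}}^{\tau}\,\|e-\phi\|_{B_{R_3}}^{1-\tau}\lesssim h^{\tau}.
\]
Adding back $\|\phi\|_{B_{R_2}}\lesssim h$ and $\|\tilde u_h-u_h\|_{B_{R_2}}\lesssim h$ then yields $\|u-u_h\|_{B_{R_2}(x_0)}\lesssim h^{\tau}$. I expect the residual estimate and the construction of the lift $\phi$ to be the main obstacle: one must verify that, after inserting $r_hw$, all consistency errors are genuinely $O(h)$ in $H^{-1}$, and that the divergence defect can be lifted without spoiling the homogeneity required by Theorem~\ref{thm:3sphere}, the compatibility of $\nabla\cdot e$ with the boundary flux on $\partial B_{R_0}(x_0)$ being the one point to handle with care.
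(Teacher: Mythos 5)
Your strategy is essentially the paper's: pass to the conforming reconstruction $\tilde u_h$, show that the residual of $(u-\tilde u_h,\,p-p_h)$ in the weak Stokes equations is $O(h)$ in $H^{-1}\times L^2$, lift that residual by solving a well-posed auxiliary Stokes problem, apply Theorem~\ref{thm:3sphere} to the remaining exact homogeneous solution, and recombine; the bounds you quote for the three balls (smallness in $B_{R_1}\subset\omega$ from Proposition~\ref{prop:stab_conv}, boundedness in $B_{R_3}$ from Theorem~\ref{thm:asymptotic}, and the residual estimate via $r_hw$ and Lemma~\ref{lem:rhH1proj}) are exactly those used in the paper. The one place you deviate is where you yourself flag the difficulty: you pose the lifting problem locally on $B_{R_0}(x_0)$ with homogeneous Dirichlet data, which requires the compatibility condition $\int_{B_{R_0}}\nabla\cdot\tilde u_h\,\mbox{d}x=0$, and this is \emph{not} guaranteed --- the construction \eqref{confcons_ip}--\eqref{d_eq} only enforces $\int_{\Omega}\nabla\cdot\tilde u_h\,\mbox{d}x=0$. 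The paper avoids this entirely by solving the auxiliary problem \eqref{pert1}--\eqref{pert2} globally on $\Omega$ with homogeneous Dirichlet conditions, where the global conservation property of $\tilde u_h$ (the whole point of the correction $d_h$) makes $\mathfrak{g}=\nabla\cdot\tilde u_h\in L^2_0(\Omega)$ and \eqref{Cont_dep_well_posed} applies directly; the resulting $\mathcal{E}_u$ is then simply restricted to the balls. So you should replace your local lift by the global one (or else correct the boundary flux on $\partial B_{R_0}$, which is an unnecessary complication); with that single repair your argument closes and coincides with the paper's proof.
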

\begin{proof}
First let $u - u_h = \underbrace{u - \tilde u_h}_{e_u \in [H^1(\Omega)]^d} +
\underbrace{\tilde u_h - u_h}_{e_h \in V_h}$, where $\tilde u_h$ is
defined by \eqref{confcons_ip}. We recall that
$$\|e_h\|_\Omega \lesssim \|h^{-\frac12}[u_h]\|_{\mathcal{F}_i} \leq C h
\|u\|_{H^2(\Omega)}$$ so we only need to bound $\|
e_u\|_{B_{R_2}(x_0)} $. Also introduce $e_p = p- p_h \in
L^2(\Omega)$. It follows that $(e_u,e_p)$ is a solution to the Stokes'
equation on weak form with a particular right hand side. Indeed we have for all $(w,q) \in [H^1_0(\Omega)]^d \times Q$
\begin{equation}\label{pert1}
a(e_u,w) + b(e_p, w) = l(w) - a(\tilde u_h,w)+b(p_h,
w) =:\left<\mathfrak{f},w \right>_{V',V}
\end{equation}
and 
\begin{equation}\label{pert2}
-b(q, e_u) = b (q, \tilde u_h) =: (\mathfrak{g},q)_{\Omega}
\end{equation}
where $\mathfrak{f} \in V'$ and $\mathfrak{g}\in L^2_0(\Omega)$. Now
consider the problem \eqref{stokes} with homogeneous Dirichlet
boundary conditions on $\partial \Omega$ and the right hand side $\mathfrak{f}$ and
$\mathfrak{g}$ as defined above. This problem is well-posed and we
call its solution $\{\mathcal{E}_u, \mathcal{E}_p\} \in
[H^1_0(\Omega)]^d\times L^2_0(\Omega)$. By the well-posedness of the
problem we know that
\[
\|\mathcal{E}_u\|_{H^1(\Omega)} + \|\mathcal{E}_p\|_\Omega \leq
\|\mathfrak{f}\|_{H^{-1}(\Omega)} + \|\mathfrak{g}\|_{\Omega} 
\]
We know from equation \eqref{cons_interp}, the fact that $\|\nabla
\cdot u_h\|_h=0$ and Proposition \ref{prop:stab_conv} that $\|\mathfrak{g}\|_{\Omega}
\lesssim \|h^{-\frac12} [u_h]\|_{\mathcal{F}_i} \lesssim h$ and for $\|\mathfrak{f}\|_{V'}$ we derive the bound
\begin{multline}\label{conv_e_u}
\sup_{\substack{w \in [H^1_0]^d \\ \|w\|_1 = 1}} \left<\mathfrak{f},w
\right>_{V'V} =  l(w) -   a(\tilde u_h,w)-b(p_h,
w) \\
= l(w - r_h w)-a_h(\tilde u_h - u_h,w)   \\ \lesssim
h \|f\|_\Omega + s_{j,-1}(u_h,u_h)^{\frac12} \lesssim h + \|\delta u\|_\omega.
\end{multline}
Considering now the functions $U:= u - \tilde u_h - \mathcal{E}_u$ and
$P:= p - p_h - \mathcal{E}_p$ we see that $\{U,P \}$ is a solution to
\eqref{stokes} with $\mathfrak{f}=0$ and $\mathfrak{g}=0$. By
equation \eqref{eq:elliptic_reg} we have $\{U,P\} \in [H^2(\varpi)]^d\times
H^1(\varpi)$ on every compact $\varpi \subset \Omega$. We
may then apply Theorem \ref{thm:3sphere} to $U$ and obtain
\begin{equation}\label{3sphere_app}
\int_{B _{R_2}(x_0)} |U|^2 ~\mbox{d}x \leq C \left( \int_{B _{R_1}(x_0)} |U|^2 ~\mbox{d}x\right)^{\tau} \left(\int_{B_{R_3}(x_0)} |U|^2 ~\mbox{d}x \right)^{1-\tau}.
\end{equation}
These results may now be combined in the following way to prove the
theorem. First by the triangle inequality, writing $u-u_h = U +
\mathcal{E}_u + \tilde u_h - u_h$,
\[
\|u - u_h\|_{B_{R_2}(x_0)} \leq 
\|\mathcal{E}_u\|_{B_{R_2}(x_0)}+ \|\tilde u_h - u_h\|_{B_{R_2}(x_0)}+\|U\|_{B_{R_2}(x_0)}
= I +II +III.
\]
By \eqref{Cont_dep_well_posed} and \eqref{conv_e_u} there holds for the first term
\[
I \lesssim \|\mathcal{E}_u\|_{H^1(\Omega)}
  \lesssim   h + \|\delta u\|_\omega
\]
and using the discrete interpolation and Proposition \ref{prop:stab_conv}
\[
II = \|\tilde u_h - u_h\|_{B_{R_2}(x_0)} \lesssim \|h^{-\frac12} [u_h]\|_{\mathcal{F}_i} \lesssim  h + \|\delta u\|_\omega.
\]
For the last term, using \eqref{3sphere_app},
we have
\[
III \lesssim \left( \int_{B _{R_1}(x_0)} |U|^2 ~\mbox{d}x\right)^{\tau/2} \left(\int_{B_{R_3}(x_0)} |U|^2 ~\mbox{d}x \right)^{(1-\tau)/2}.
\]
By the definition of $U$ and since by assumption $B _{R_1}(x_0)
\subset \omega$
\begin{multline}
\left(\int_{B _{R_1}(x_0)} |U|^2 ~\mbox{d}x\right)^{\frac12} \lesssim \|r_h u - u_h\|_\omega
+ \|r_hu - u\|_\omega + \|\tilde u_h - u_h\|_\omega \\
+ \|\mathcal{E}_u\|_{B_{R_1}(x_0)} \lesssim h + \|\delta u\|_\omega.
\end{multline}
Here we applied Proposition \ref{prop:stab_conv},
  \eqref{eq:approx}, discrete interpolation \eqref{cons_interp}, and
  \eqref{Cont_dep_well_posed} applied to $\mathcal{E}_u$.
Finally by the triangle inequality, the a priori assumption $u \in
H^2(\Omega)$,  \eqref{Cont_dep_well_posed}  and the first claim of
Theorem \ref{thm:asymptotic} we have
\[
\left(\int_{B_{R_3}(x_0)} |U|^2 ~\mbox{d}x \right)^{\frac12} \leq
\|u\|_{H^1(\Omega)} + \|\tilde
u_h\|_{H^1(\Omega)}+\|\mathcal{E}_u\|_{H^1(\Omega)} \lesssim 1 +
h^{-1} \|\delta u\|_\omega.
\]
The claim follows by collecting the bounds on the terms $I-III$ and
applying the assumption on the perturbations in data versus the
mesh-size.
\end{proof}
\begin{remark}
It is straightforward to prove the Proposition \ref{prop:stab_conv}
and the Theorems \ref{thm:asymptotic} and \ref{thm:error_est} also for
$\gamma_p \ge 0$ and $\gamma_x \ge 0$ and thereby extending the
analysis to include the method \eqref{BD_method}. We leave the details for the reader.
\end{remark}
\begin{remark}
One may also introduce perturbations in the right hand side
$f$. Provided these perturbations are in $[L^2(\Omega)]^d$ the same
results holds. Details on the necessary modifications can be found in \cite{Bu15}.
\end{remark}
\section{Numerical example}
Our numerical example is set in the unit square
$\Omega = (0,1)^2$ with zero right hand side and data given in the
disc $S_{1/2}:=\{(x,y) \in\mathbb{R}^2: \sqrt{(x-0.5)^2 +
  (y-0.5)^2} < 0.125\}.$
The flow is nonsymmetric with the exact solution given by
$$
u(x,y) = (20x y^3, 5x^4 - 5 y^4) \qquad\text{and}\qquad 
 p(x,y) = 60 x^2 y - 20 y^3 -5.
$$

We consider the formulation \eqref{FEM:compact}-\eqref{Sglobal},
with $l(w_h)=0$ For the parameters we chose, $\gamma_M=800$ and
$\gamma_u=10^{-5}$, $\gamma_p=\gamma_z=\gamma_x=0$. First we perform
the computation with unperturbed data. The results are presented in
the left graphic of Figure \ref{fig:exa2}. We report the velocity error both in the global
$L^2$-norm (open square markers),
the local $L^2$-norm in the subdomain where $\sqrt{(x-0.5)^2 +
  (y-0.5)^2} < 0.375$ (filled square markers) and in the residual
quantities of \eqref{residual}
(circle markers, $r_1$ filled, $r_2$ open),
 \begin{equation}\label{residual}
r_1:=\left(\int_{S_{1/2}} (u_h - u)^2 ~\mbox{d}x\right)^{\frac12} \mbox{
and }
r_2:=\|h^{-\frac12}[u_h]\|_{\mathcal{F}_i}.
\end{equation}
The global pressure is plotted with triangle markers.
%
The error plots for this case are given in figure \ref{fig:exa2}.
We observe the $O(h)$ convergence of the residual
quantities \eqref{residual}. The global velocity and pressure $L^2$-errors appears to have
approximately $O(|log(h)|^{-1})$ convergence. The local error
matches the result of Theorem \ref{thm:error_est}. Indeed the dotted line is shows
the behavior of the quantity $C_1 \|e\|_{\Omega}^{0.3} (r_1+r_2)^{0.7} +
10 h^2$ illustrating the different components of the local error used in the proof of the theorem. We see that this quantity (with a properly chosen constant)
gives a good fit with the local error. 

The same computation was repeated with a $1\%$ relative random
perturbation of data. The results for this case is reported in the
right plot of Figure \ref{fig:exa2}.
As predicted by theory the results are stable under perturbation of data as long as the discretization error is larger than
the random perturbation (up to a constant). When the perturbations
dominate the errors in all quantities appear to stagnate.
\begin{figure}
\begin{center}
\includegraphics[height=6cm]{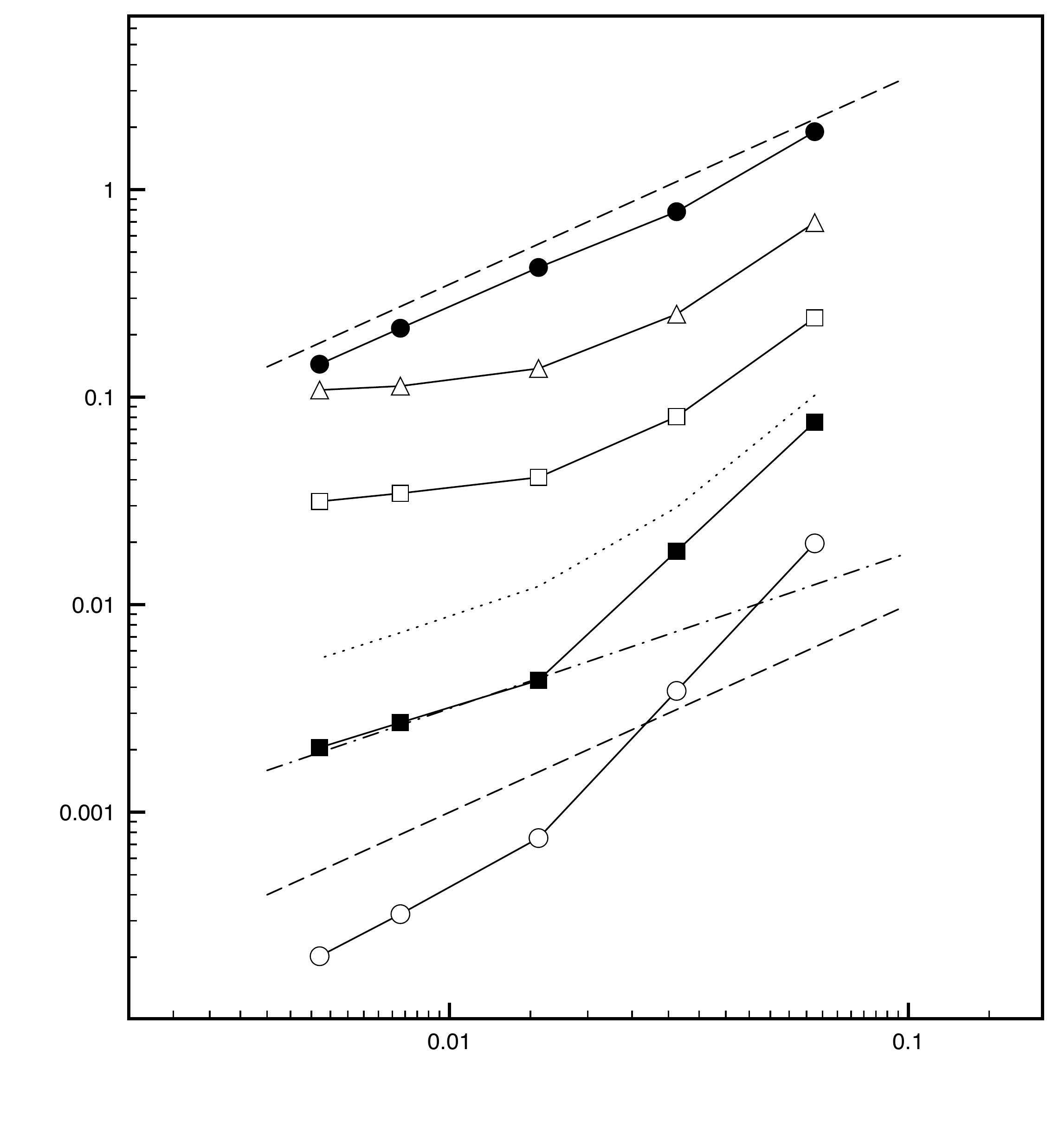}\includegraphics[height=6cm]{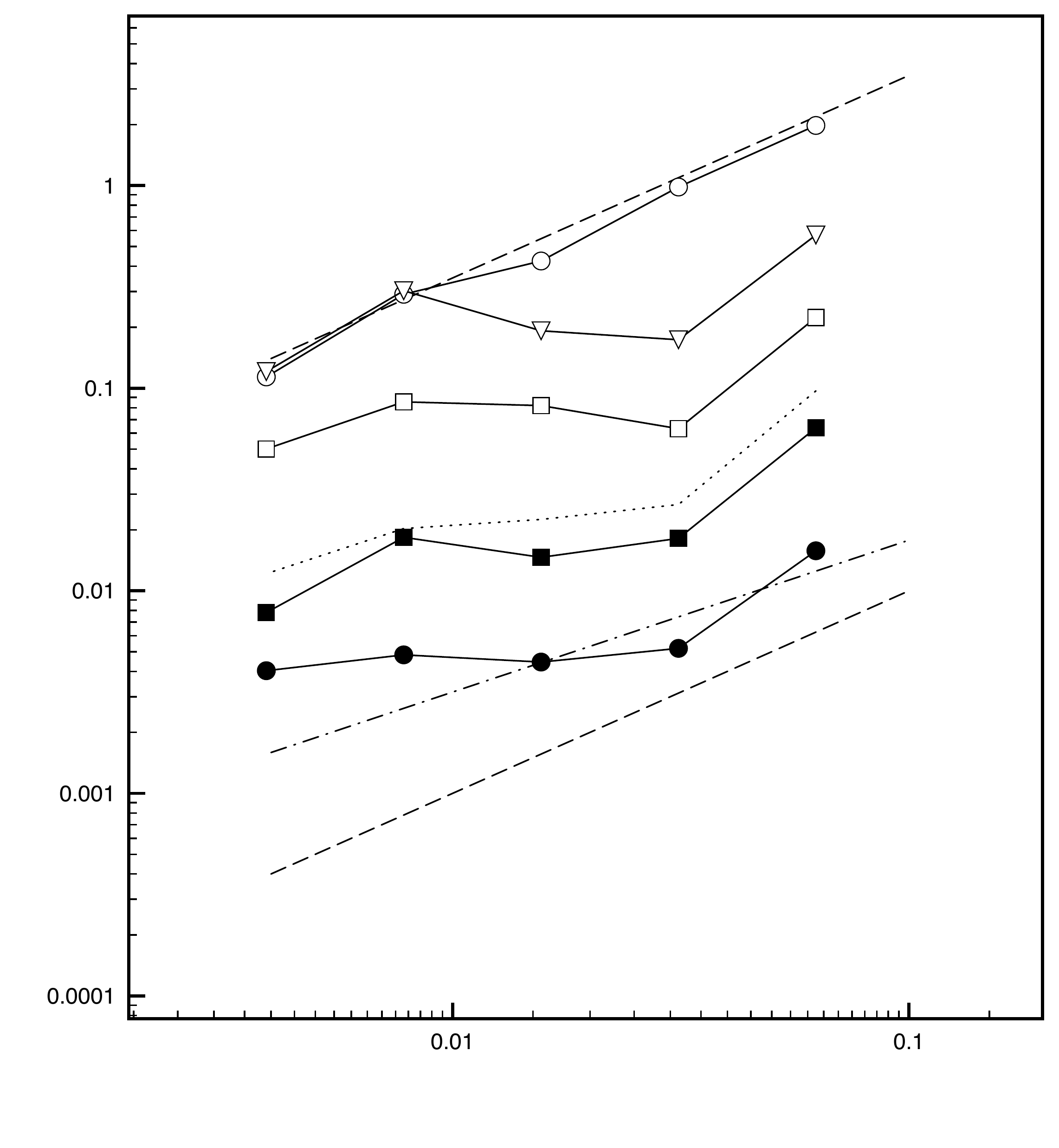}
\end{center}
\caption{Relative $L^2$-error  against mesh-size, left unperturbed
  data, right with $1\%$ relative noise. Reference lines are the same in both
  plots and of orders, dashed lines $\approx
  O(h)$ with different constants, dash dot $\approx
  O(h^{\frac34})$
  and dotted $C_1 \|e\|_{\Omega}^{0.3} (r_1+r_2)^{0.7} +
10 h^2$}\label{fig:exa2}
\end{figure}
\newpage

\end{document}